

\documentclass[11pt]{article} 

\usepackage[utf8]{inputenc} 


\usepackage{geometry} 
\geometry{a4paper} 

\usepackage{graphicx} 
\usepackage{graphics}

\usepackage{booktabs} 
\usepackage{array} 
\usepackage{paralist} 
\usepackage{verbatim} 
\usepackage{subfig} 
\usepackage{amssymb}
\usepackage{amsthm}
\usepackage{amsmath,amssymb,amsopn,amsfonts,mathrsfs,amsbsy,amscd}
\usepackage{longtable}
\usepackage[dvipsnames]{xcolor}
\usepackage{ulem}
\usepackage{float}
\usepackage{tabularx}
\usepackage{caption}
\usepackage{bbm}
\usepackage{accents}
\usepackage{stmaryrd}

\usepackage{mathtools} 
\usepackage{etoolbox}
\usepackage{multirow}
\usepackage{enumerate}
\usepackage{url}
\usepackage[export]{adjustbox}
\usepackage{palatino}


\usepackage{imakeidx}
\makeindex

\usepackage[symbols,nogroupskip,sort=none]{glossaries-extra}

\usepackage{setspace}
\onehalfspacing

\usepackage{mdframed}

\DeclareMathOperator{\rank}{rk}
\usepackage{tikz}
\usepackage{tikz-cd}
\usepackage{subfig}
\usetikzlibrary{arrows}
\usetikzlibrary{shapes.geometric}
\usetikzlibrary{decorations.pathreplacing,decorations.markings}

\usepackage{fancyhdr} 
\pagestyle{fancy} 
\lhead{}\chead{}\rhead{}
\lfoot{}\cfoot{\thepage}\rfoot{}

\usepackage{sectsty}
\allsectionsfont{\sffamily\mdseries\upshape} 

\usepackage[nottoc,notlof,notlot]{tocbibind} 
\usepackage[titles,subfigure]{tocloft} 
\usepackage[super]{nth}
\usepackage[numbers]{natbib}
\usepackage{enumitem}

\newtheorem{Theo}{Theorem}[section]
\newtheorem{Def}{Definition}[section]

\newtheorem{Exp}{Example}[section]

\newtheorem{Lem}{Lemma}[section]
\newtheorem{Cor}{Corollary}[section]
\newtheorem{Pro}{Proposition}[section]

\newtheorem{Rk}{Remark}[section]
\newtheorem{Rks}{Remarks}[section]

\newcommand{\Z}{\mathbbm{Z}}

\title{An Improvement of the lower bound for the minimum number of link colorings by quandles}

\author{H. Abchir\\ {\scriptsize Hassan II University. 
		Casablanca, Morocco.}\\\scriptsize{e-mail: hamid.abchir@univh2c.ma} \\S. Lamsifer\\\scriptsize{Hassan II University. 
		Casablanca, Morocco.}\\\scriptsize{e-mail: soukaina.lamsifer-etu@etu.univh2c.ma}}

\begin{document}
\maketitle


\begin{abstract} We improve the lower bound for the minimum number of colors for linear Alexander quandle colorings of a knot given in Theorem 1.2 of ``Colorings beyond Fox: The other linear Alexander quandles'' (Linear Algebra and its Applications, Vol. 548, 2018). We express this lower bound in terms of the degree $k$ of the reduced Alexander polynomial of the considered knot. We show that it is exactly $k+1$ for L-space knots. Then we apply these results to torus knots and Pretzel knots $P(-2,3,2l+1)$, $l\ge 0$. We note that this lower bound can be attained  for some particular knots. Furthermore, we show that Theorem 1.2 quoted above can be extended to links with more that one component.
\end{abstract}


\section{Introduction}
The idea of coloring knots was initiated by R. Fox around 1960 (see \cite{fox1962quick}). He introduced colorings by dihedral quandles called $n$-colorings. Let $K$ be a $p$-colorable knot, where $p$ is prime and let $C_p(K)$ denote the minimal number of colors needed to color a diagram of $K$. Nakamura et al. proved in \cite{Naka_Nakani_Satoh} that $C_p(K)\ge\lfloor\log_2 p\rfloor +2$.\\
The problem of finding the minimum number of colors for $p$-colorable knots with primes up to 19 was investigated by many authors.  In 2009, S. Satoh showed in \cite{satoh20095} that $C_5(K)=4$. In 2010, K. Oshiro proved that $C_7(K)=4$ \cite{oshiro2010any}. In 2016, T. Nakamura, Y. Nakanishi and S. Satoh showed in \cite{satoh2016} that $C_{11}(K)=5$. In 2017, M. Elhamdadi and J. Kerr \cite{elhamdadi2016fox} and independently F. Bento and P. Lopes \cite{bento2017minimum} proved that $C_{13}(K)=5$. In 2020, H. Abchir, M. Elhamdadi and S. Lamsifer \cite{abch_elham_lamsifer} showed that $C_{17}(K)=6$. In 2022, Y. Han and B. Zhou showed that $C_{19}(K)=6$ \cite{Han_Zhou}.\\
The same problem may be studied for colorings by linear Alexander quandles which generalize dihedral ones. A linear Alexander quandle $\Lambda_{n,m}$ is a quandle whose underlying set is $\Z_n$, $n\ge 3$, endowed with the binary operation $x*y=mx+(1-m)y\,\,\mod n$, for some integer $m$ such that $(m,n)=1$. Let $L$ be a link which admits a non-trivial coloring by $\Lambda_{n,m}$, i.e. for which there exists a non-constant quandle homomorphism from the fundamental quandle $Q(L)$ to $\Lambda_{n,m}$. We denote by $mincol_{n,m}(L)$ the minimum number of colors needed to provide a non-trivial coloring of $L$. It is an interesting invariant of $L$. For a knot $K$, if $n$ is a prime integer $p$, L. Kauffman and P. Lopes showed in \cite{Kauff_Lopes} that
  $$2 + \lfloor \log_M p \rfloor \leq mincol_{p,m}(K),$$
  where $M= max \lbrace \vert m \vert, \vert m-1 \vert \rbrace$.\\
  We give an enhancement of the last result by proving the following theorem.
\begin{Theo}\label{MainTheo}
  Let $K$ be a knot. Let $\Delta_{K}^{0}(t)=\sum \limits_{\underset{}{i=0}}^{k} c_i t^i$ be the reduced Alexander polynomial of $K$. Let $m$ be an integer, such that $m>\displaystyle \max_{0\leq i \leq k} \lbrace |c_i|\rbrace +1$ and $p=\Delta_{K}^{0}(m)$ is an odd prime integer.
\begin{enumerate}
\item If $c_k=1$ and the penultimate non-zero coefficient is negative, then
  $$k+1 \leq mincol_{p,m}(K).$$
\item If $c_k>1$ or the penultimate non-zero coefficient is positive, then
  $$k+2\leq mincol_{p,m}(K).$$
\end{enumerate}
\end{Theo}

So, the lower bound stabilizes for suitable choices of $m$ and $p$ and no longer depends on these two integers. On the other hand, the lower bound we give comes from a topological invariant of the knot. In particular, it is exactly $k+1$ for an $L$-space knot, for any $m > 1$. This entails that if the torus knot $T(a,b)$ whose crossing number is $c(T(a,b))$ admits a non-trivial coloring by $\Lambda_{p,m}$, then $mincol_{p,m}(T(a,b))$ is bounded as follows: $$c(T(a,b))-(a-2)\le mincol_{p,m}(T(a,b))\le c(T(a,b)).$$
  
  Hence $mincol_{p,m}(T(2,b))=c(T(2,b))$. On the other hand, we show that for suitable choices of $m$ and $p$, $T(2,b)$ displays KH behavior, which means that $T(2,b)$ admits a reduced alternating diagram equipped with a non-trivial coloring by the quandle $\Lambda_{p,m}$ such that different arcs receive different colors. Furthermore, we show that if the Pretzel knot $P(-2,3,a)$ has a non-trivial coloring by a linear Alexander quandle $\Lambda_{p,m}$, then for suitable choices of $m$ and $p$ we have $a+4\le mincol_{p,m}(P(-2,3,a))$ and the equality holds for $m=2$. Finally, we show that Theorem 1.2 proved by Kauffman and Lopes in \cite{Kauff_Lopes} holds also for links with more than one component.\\

  The paper is organized as follows. In the second section, we recall the main tools we need to prove our results. In the third section we prove our main theorem. The fourth section is devoted to some applications. In the last section, we give a generalization of Theorem 1.2 in \cite{Kauff_Lopes}  to links.

\section{Preliminaries}
  In this section we give an overview of the main tools we need.\\

\noindent\textbf{Quandles.}

 \begin{Def}
A \textit{quandle}, is a non-empty set $Q$ equipped with a binary operation
$$
\begin{tabular}{cccc}
$* :$ & $Q\times Q$ & $\longrightarrow$ & $Q$  \\ 
 & $(x,y)$ & $\longmapsto$ & $x* y$ \\ 
\end{tabular} 
$$
satisfying the following three axioms:
\begin{enumerate}
\item For all $x\in Q$, $x* x =x$.
\item For all $y\in Q$, the map $R_y:Q\rightarrow Q$ defined by $R_y(x)=x* y$, $x\in Q$, is bijective.
\item For all $x, y, z \in Q$, $(x* y)* z=(x* z)* (y* z)$ (right self-distributivity).
\end{enumerate}
\end{Def}

\noindent We write $x*^{-1} y$ for $R_{y}^{-1}(x)$.\\
When needed, we will denote the quandle $Q$ by the pair $(Q,*)$.
\begin{Exp} \textit{Trivial quandle.}\\
Let $X$ be a non-empty set with the operation $x* y=x$ for any $x, y\in X$ (i.e. $R_{y}=id_{X}$, for any $y\in X$), is a quandle called the trivial quandle.
\end{Exp}
\begin{Exp}\textit{Dihedral quandle.}\\
Let $n$ be a positive integer. For $x$ and $y$ in $\Z_n$ (integers modulo $n$), define $x* y=2y-x \mod n$. The operation $*$ defines a quandle structure on $\Z_n$, called the dihedral quandle and is sometimes denoted $R_n$.
\end{Exp}

\begin{Exp}\textit{Alexander quandle.}\\
  An Alexander quandle $M$ is a $\Z[t,t^{-1}]$-module endowed with the following binary operation:
$$x* y=tx+(1-t)y\,\, {\rm for\,\, all}\,\, x,y\in M.$$
Note that we have $x*^{-1} y=t^{-1}x+(1-t^{-1})y$.\\
\end{Exp}

\begin{Exp}\label{Exp_4}\textit{Linear Alexander quandle.}\\
Let $n$ be an integer $n>1$, one can consider $\Lambda_n/(h)$ where $\Lambda_n=\Z_n[t,t^{-1}]$ and $h$ is a monic polynomial in $t$ (see \cite{Nelson}). If $h(t)=(t-m)$ and $m$ and $n$ are integers such that $\gcd{(m, n)}=1$, then we obtain what is called \textit{linear Alexander quandle} that we denote $\Lambda_{n,m}$. This amounts to considering the underlying set $\Z_n$ with the binary operation:
$$x* y=mx+(1-m)y \pmod n,\,\,\, {\rm for\,\, all}\,\, x,y\in \mathbb{Z}_n.$$
Note that we have $x*^{-1} y=m^{-1}x+(1-m^{-1})y \pmod n$.\\
If $m=-1$, we get the dihedral quandle $R_n$.
\end{Exp}

\begin{Def} Let $(Q_1,*_1)$ and $(Q_2,*_2)$ be two quandles. A map $f:Q_1\rightarrow Q_2$ is a \textit{quandle homomorphism} if it satisfies $$f(x*_1 y)=f(x)*_2 f(y),\ \ \forall x,y\in Q_1.$$
If $f$ is bijective, we say that $f$ is a quandle \textit{isomorphism}. If $f$ is bijective and $Q_1=Q_2$, the map $f$ is called a quandle \textit{automorphism}.
\end{Def}

\noindent\textbf{Coloring links by Alexander quandles}\\
Let $(Q,*)$ be a quandle and $D$ a diagram of an oriented link $L$. A coloring of $D$ by $Q$ is a map $\mathcal{C}$ from the set of arcs of $D$ denoted by $\mathcal{A}$ to $Q$, such that at each crossing of $D$, if the over-arc $\alpha_1$ is colored by $\mathcal{C}(\alpha_1)={y}$ and the incoming under-arc is colored by $\mathcal{C}(\alpha_2)=x$ then the outcoming under-arc is colored by $\mathcal{C}(\alpha_3)=x* y$ or $\mathcal{C}(\alpha_3)=x*^{-1} y$ according to the rule depicted in Fig.\ref{Fig.1}.
\begin{figure}[H]
\centering
	\includegraphics[scale=0.2]{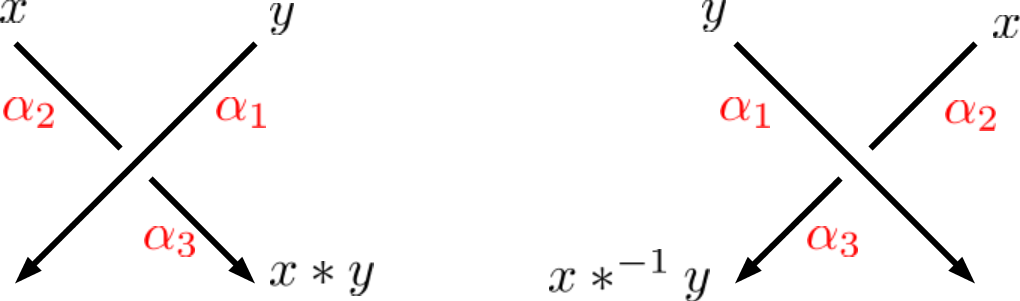} 
	\caption{Coloring conditions. \label{Fig.1}}
\end{figure}

If $Q$ is an Alexander quandle, by collecting the coloring conditions at all crossings of $D$, we get a homogeneous system of linear equations over $\Z[t,t^{-1}]$. The matrix associated to this system of equations is called the Alexander matrix. Its rows correspond to the crossings of $D$ and the columns correspond to the arcs of $D$. Each row has only three non-zero entries which are $t$, $1-t$ and $-1$. So on the one hand $(\lambda ,\dots ,\lambda)$ is a solution for any $\lambda\in\Z[t,t^{-1}]$ (trivial solutions), and on the other hand the determinant of the Alexander matrix is zero. Hence, it is easy to see that a non-trivial solution of the initial homogeneuous system corresponds to a non-trivial solution of the system of equations determined by the original matrix with one row and one column deleted. The determinant of this last submatrix is known to be the Alexander polynomial of the considered link denoted by $\Delta_L(t)$. Therefore, the existence of non-trivial solutions corresponds to working on the quotient of $\Z[t,t^{-1}]$ by $\Delta_L(t)$, which is a Laurent polynomial on the variable $t$ determined up to $\pm t^n$, for any integer $n$. We will use the reduced Alexander polynomial defined in \cite{Bae}.

\begin{Rks}
  \begin{enumerate}
  \item If $L$ is a knot $K$, then the reduced Alexander polynomial is exactly that given in the proof of Corollary 6.11 in \cite{Lickorish_book}, $\Delta_L(t)=c_0+c_1t+c_2t^2+\cdots +c_Nt^N$, where $N$ is even, $c_{N-r}=c_r$, $c_{\frac{N}{2}}$ is odd and $c_0>0$.
  \item If $L$ has $\mu$ components, $\mu\ge 2$, the reduced Alexader polynomial is obtained from the multivariable Alexander polynomial $\Delta_L(t_1,\dots ,t_\mu)$ by setting $t_i=t$ for each $i$. Recall that there is a relation between Alexander polynomial and multivariable Alexander polynomial as shown in Proposition 7.3.10 in \cite{Kawauchi_book}:$$\Delta_L(t)=(1-t)\Delta_L(t,\dots ,t).$$
  \end{enumerate}
\end{Rks}

\begin{Exp}
We consider the diagram $D$ of the knot $7_3$ whose arcs are labeled as shown in Fig.\ref{Fig.2}. By writing the coloring conditions illustrated in Fig.\ref{Fig.1} at each crossing $c_i$ of $D$, $1\leq i\leq 7$, we obtain the homogeneuous system of linear equations shown on the right side of Fig.\ref{Fig.2},
\begin{figure}[H]
\centering
	\includegraphics[scale=0.2]{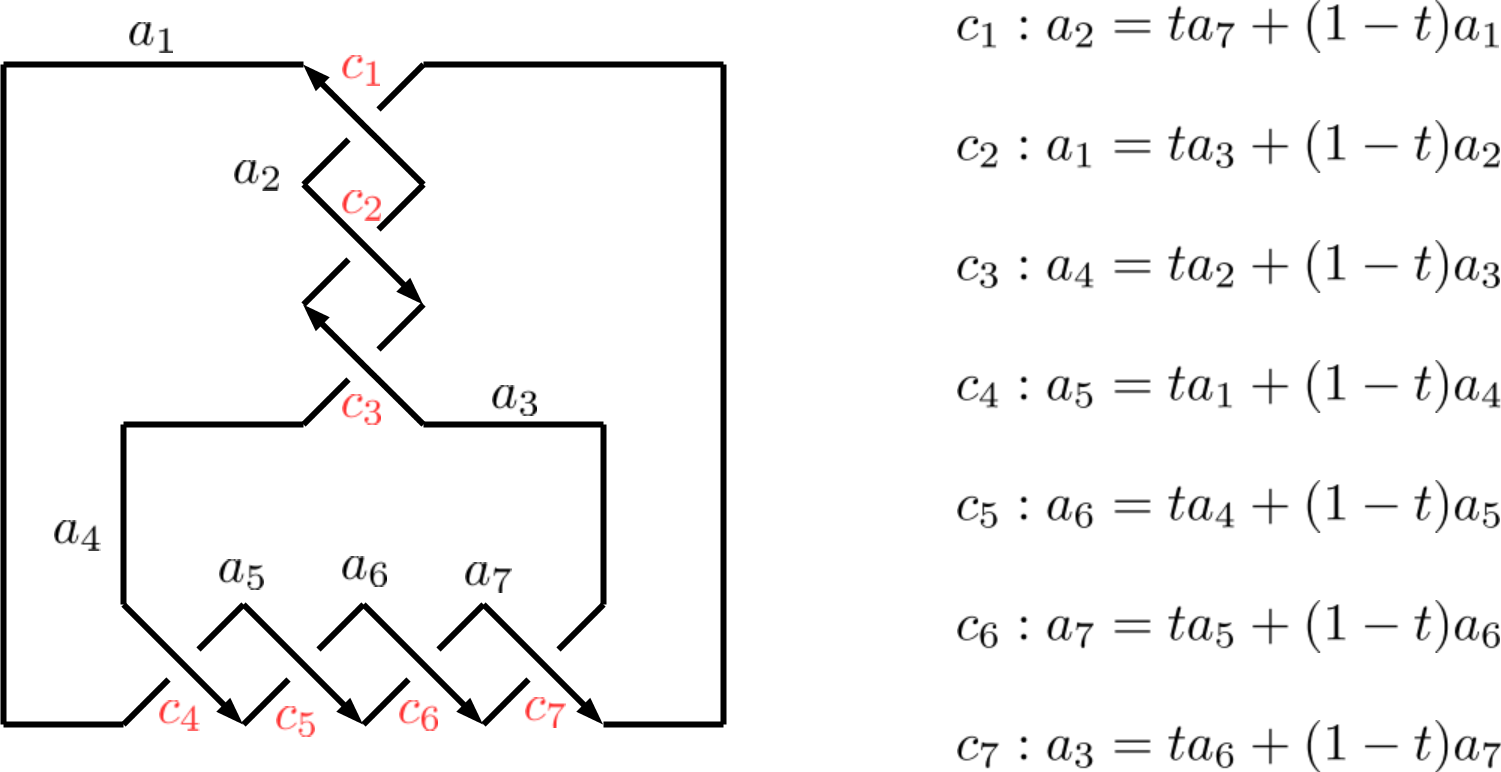} 
	\caption{The equations corresponding to the Alexander colorings of the knot $7_3$. \label{Fig.2}}
      \end{figure}
      
      \noindent we get the following Alexander matrix $A$:
      
\begin{equation*}
A = 
\begin{pmatrix}
a_1 & a_2 & a_3 & a_4 & a_5 & a_6 & a_7\\
\hline
1-t & -1 & 0 & 0 & 0 & 0 & t\\
-1 & 1-t & t & 0 & 0 & 0 & 0\\
0 & t & 1-t & -1 & 0 & 0 & 0\\
t & 0 & 0 & 1-t & -1 & 0 & 0\\
0 & 0 & 0 & t & 1-t & -1 & 0\\
0 & 0 & 0 & 0 & t & 1-t & -1\\
0 & 0 & -1 & 0 & 0 & t & 1-t\\
\end{pmatrix}
\end{equation*}

The determinant of the matrix $A$ is $0$. Let $M$ be the matrix obtained from $A$ by deleting the first row and the first column. The matrix $M$ is called the first minor matrix and its determinant is the Alexander polynomial of the knot $7_3$: $\det(M)=-2t+3t^2-3t^3+3t^4-2t^5.$

In order to obtain the reduced Alexander polynomial of the knot $7_3$, we multiply $\det(M)$ by $-t^{-1}$ and then we get
$$\Delta^0_{7_3}(t)=2-3t+3t^2-3t^3+2t^4.$$
\end{Exp}

\noindent\textbf{Coloring links by linear Alexander Quandles}\\
In practice, it is more interesting to color links by using finite quandles as linear Alexander quandles given in Example \ref{Exp_4}. These are the ones we will use to color links in this article. One can easily adapt what has been said in the general case of colorings by an Alexander quandle to this new setting.\\
So, if $m$ and $n$ are coprime integers, a coloring of a diagram $D$ of a link $L$ by the quandle $\Lambda_{n,m}$ is a map from the set $\mathcal{A}$ of arcs of $L$ to $\Lambda_{n,m}$ satisfying the coloring conditions in Fig. \ref{Fig.1}. It is easy to see that such non-trivial coloring exists if $n$ divides $\Delta_L^0(m)$. That coloring is called an $(n,m)$-coloring.\\
In this setting also, one can consider the minimum number of colors as it was done for Fox-colorings. We follow the definition given by Kauffman and Lopes in \cite{Kauff_Lopes}.
\begin{Def}\textbf{Minimum number of colors:}
Let $L$ be a link admitting non-trivial $(p,m)$-colorings. Let $D$ be a diagram of $L$ and let $n_{D,p,m}$ be the minimum number of colors it takes to equip $D$ with a non-trivial $(p,m)$-coloring. We let
$$mincol_{p,m}(L) = min\lbrace n_{D,p,m} |D \mbox{ is a diagram of } L\rbrace$$
and refer to it as the minimum number of colors for non-trivial $(p,m)$-colorings of $L$.
\end{Def}
The following Theorem gives an estimation of the minimum number of colors.
\begin{Theo}[\cite{Kauff_Lopes}]\label{Theo1}
Let $K$ be a knot i.e., a $1$-component link. Let $p$ be an odd prime. Let $m$ be an integer such that $K$ admits non-trivial $(p, m)$-colorings $\pmod p$. If $m \neq 2$ (or $m = 2$ but
$\Delta_{K}^{0} (m)\neq 0$) then
$$2 + \lfloor \log_M p \rfloor \leq mincol_{p,m}(K),$$
where $M= max \lbrace \vert m \vert, \vert m-1 \vert \rbrace$.
\end{Theo}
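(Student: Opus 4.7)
The inequality $2+\lfloor\log_M p\rfloor\leq c$, with $c:=mincol_{p,m}(K)$, is equivalent to $p<M^{c-1}$. The plan is to fix a non-trivial $(p,m)$-coloring $f\colon\mathcal{A}\to\mathbb{Z}_p$ of a diagram $D$ of $K$ attaining the minimum number of colors, and to extract from it a combinatorial witness that forces $p<M^{c-1}$.

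First I would normalise. Since $m+(1-m)=1$, the affine transformations $f\mapsto \lambda f+\tau$ with $\lambda\in\mathbb{Z}_p^{\times}$ and $\tau\in\mathbb{Z}_p$ send $(p,m)$-colorings to $(p,m)$-colorings and preserve the color count, so I may assume $0\in S:=f(\mathcal{A})$. Lift $S$ to $\widetilde S\subset\{0,1,\ldots,p-1\}$ and rewrite the crossing rule as the two identities $z-y\equiv m(x-y)\pmod p$ and $z-x\equiv (1-m)(y-x)\pmod p$.

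The heart of the proof is a cascading chain construction. Non-triviality of $f$ produces a crossing with $x\neq y$, and hence a non-zero element $d_0:=y-x\in\mathbb{Z}_p$. The two identities above show that both $m\cdot d_0$ and $(1-m)\cdot d_0$ are realised as signed differences of elements of $\widetilde S$. By iterating this observation along consecutive crossings of $D$, one aims to produce a sequence $d_0,d_1,\ldots,d_{c-2}$ of non-zero elements of $\mathbb{Z}_p$ satisfying $d_i=\alpha_i d_{i-1}$ with $\alpha_i\in\{m,1-m\}$ and admitting integer lifts of absolute value at most $M^i$. Pairwise distinctness modulo $p$ of these $c-1$ values, together with the fact that two distinct lifts of magnitude at most $M^{c-2}$ cannot be congruent modulo $p$ unless $p\leq 2M^{c-2}$, then forces $p<M^{c-1}$ (since $M\geq 2$ outside the degenerate trivial-quandle cases $m\in\{0,1\}$).

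The principal obstacle is to establish that a minimum-color coloring genuinely supports a chain of length $c-1$: each newly produced difference $d_i$ must correspond to an actual pair of colors of $\widetilde S$ realised at a crossing of $D$, and one must argue that the chain cannot stall or loop back before reaching length $c-1$. The hypothesis $m\neq 2$ (or $\Delta_K^0(m)\neq 0$ when $m=2$) is precisely what rules out a degenerate collapse where the factor $1-m=-1$ turns a scaling step into a mere sign reflection that short-circuits the chain; imposing $\Delta_K^0(m)\neq 0$ in that exceptional case guarantees that the coloring is non-degenerate enough for the chain to propagate. Once the chain is secured, the bound $p<M^{c-1}$ is immediate and equivalent to $2+\lfloor\log_M p\rfloor\leq mincol_{p,m}(K)$.
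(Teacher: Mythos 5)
This statement is quoted from Kauffman--Lopes and is not proved in the paper itself; the only self-contained argument of this kind in the paper is the proof of the link generalization (Theorem \ref{Theo.4}), which is a determinant argument: merge the columns of the coloring matrix corresponding to equal colors to get a matrix $A_1$ with $d$ columns, rank $d-1$, and rows whose nonzero entries still lie in $\{-m,\,m-1,\,1\}$; extract a nonsingular $(d-1)\times(d-1)$ minor $B$; then $p\mid\det B$ (the vector of distinct colors is a nontrivial solution mod $p$) while $0<\lvert\det B\rvert\le M^{d-1}$, so $p\le M^{d-1}$ and the bound follows. Your proposal takes a genuinely different route, but it has a real gap precisely at the point you yourself flag as ``the principal obstacle'': you never establish that a minimum-color diagram supports a chain $d_0,\dots,d_{c-2}$ of length $c-1$ whose terms are pairwise distinct modulo $p$. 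That existence claim is the entire content of the theorem, not a technical detail to be deferred, and nothing in the proposal supplies it.

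Moreover, the chain construction as described breaks in cases the theorem explicitly covers. For $m=-1$ (Fox colorings) the multipliers are $m=-1$ and $1-m=2$, and nothing prevents the chain from repeatedly picking the factor $-1$: the differences then merely alternate in sign, no new residues are produced, and no lower bound on $c$ follows. More generally, whenever $m^2\equiv 1$ or $(1-m)^2\equiv 1 \pmod p$ the chain can return to $\pm d_0$ after two steps, so the ``pairwise distinctness modulo $p$'' you invoke in the final count is simply unavailable. The role you assign to the hypothesis $m\ne 2$ (or $\Delta_K^0(2)\ne 0$) is also not the right one: it is not there to stop the chain from collapsing --- the symmetric degeneracy at $m=-1$ is permitted by the theorem --- but rather, as in the proof of Theorem \ref{Theo.4}, to guarantee that the coloring matrix has corank one so that the minor $B$ is nonsingular and $p\le\lvert\det B\rvert$ can be concluded. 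The determinant method avoids all of these difficulties by treating the whole linear system at once instead of propagating one difference at a time; to salvage your approach you would need a new idea forcing the chain to visit $c-1$ distinct residues, and I do not see one.
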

\section{An improvement of the lower bound for the minimum number of knot colorings by linear Alexander quandles}
In this section we prove that for suitable choice of the integer $m$, the lower bound of $mincol_{p,m}(K)$ stabilizes and no longer depends on $m$. This provides an interesting improvement of Theorem \ref{Theo1} when $p=\Delta_K^0(m)$.\\
We begin by proving the following lemma.

\begin{Lem}\label{Claim.1}
Let $K$ be a knot and $\Delta_{K}^{0}(t)=\sum \limits_{\underset{}{i=0}}^{k} c_i t^i$, its reduced Alexander polynomial. Let $m$ be an integer, $m>1$, and $p=\Delta_{K}^{0}(m)$. If $m>\displaystyle \max_{0\leq i \leq k} \lbrace |c_i|\rbrace+1$, then $2 + \lfloor \log_m p \rfloor$ is either $k+1$ or $k+2$.
\end{Lem}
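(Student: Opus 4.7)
\textbf{Proof plan for Lemma \ref{Claim.1}.} The statement is equivalent to showing $\lfloor \log_m p\rfloor \in \{k-1, k\}$, i.e.\ that $m^{k-1} \le p < m^{k+1}$. The plan is to sandwich $p=\Delta_K^0(m)$ between two consecutive powers of $m$ by exploiting the coefficient bound $|c_i|\le m-2$ (which follows from $m>\max_i |c_i|+1$ and integrality) together with the normalization recalled in the remarks: since $\Delta_K^0$ is the reduced Alexander polynomial of a knot, the leading coefficient $c_k=c_0$ is a positive integer, so $c_k \ge 1$.

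For the upper bound, I would simply apply the triangle inequality,
\begin{equation*}
p \;=\; \sum_{i=0}^{k} c_i m^i \;\le\; (m-2)\sum_{i=0}^{k} m^i \;=\; (m-2)\cdot\frac{m^{k+1}-1}{m-1},
\end{equation*}
and note that $(m-2)/(m-1)<1$ for $m\ge 3$ (which is guaranteed since $m>\max|c_i|+1\ge 2$), so $p< m^{k+1}-1<m^{k+1}$. This immediately yields $\lfloor\log_m p\rfloor\le k$.

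For the lower bound, I would isolate the leading term using $c_k\ge 1$:
\begin{equation*}
p \;\ge\; m^k - \sum_{i=0}^{k-1} |c_i|\,m^i \;\ge\; m^k - (m-2)\cdot\frac{m^k-1}{m-1}.
\end{equation*}
A short algebraic simplification collapses the right-hand side to $\frac{m^k-1}{m-1}+1 = (1+m+\cdots+m^{k-1})+1$, which is strictly greater than $m^{k-1}$. Hence $\lfloor\log_m p\rfloor\ge k-1$, and combining with the upper bound gives $\lfloor\log_m p\rfloor\in\{k-1,k\}$, so $2+\lfloor\log_m p\rfloor\in\{k+1,k+2\}$.

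There is no real obstacle here: the argument is an elementary two-sided geometric-series estimate. The only points that require some care are (i) checking that the hypothesis $m>\max|c_i|+1$ really does force $|c_i|\le m-2$ for every $i$ (using that the $c_i$ are integers), and (ii) invoking the normalization of the reduced Alexander polynomial recorded just before the lemma to guarantee $c_k\ge 1$, which is what makes the lower bound work. Once these are noted, the inequalities fall out directly.
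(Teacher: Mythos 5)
Your proposal is correct, but it takes a genuinely different route from the paper. The paper proves the lemma by explicitly constructing the base-$m$ expansion of $p$: it regroups the monomials of $\Delta_K^0(t)$ (with a lengthy case analysis on whether the signs of the nonzero coefficients alternate, whether intermediate coefficients vanish, and where the last sign change occurs) so that after evaluation at $m$ every digit $d_i$ satisfies $0\le d_i<m$, and then reads off $\lfloor\log_m p\rfloor$ as the index of the leading digit. You instead sandwich $p$ between $m^{k-1}$ and $m^{k+1}$ by a two-sided geometric-series estimate, using $|c_i|\le m-2$ and $c_k=c_0\ge 1$; the algebra checks out ($m^k-(m-2)\frac{m^k-1}{m-1}=\frac{m^k-1}{m-1}+1>m^{k-1}$, and $(m-2)\frac{m^{k+1}-1}{m-1}<m^{k+1}$ since $m\ge 3$), and this does prove the lemma as stated, considerably more economically. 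What your argument does not recover is the extra information the paper extracts along the way: its digit-by-digit analysis pins down \emph{which} of the two values $\lfloor\log_m p\rfloor$ takes ($k$ when $c_k>1$ or the penultimate nonzero coefficient is positive, $k-1$ when $c_k=1$ and that coefficient is negative), and this dichotomy is exactly what Theorem \ref{MainTheo} and Remark \ref{Rk_MainTheo} rely on afterwards. So your proof suffices for the lemma as literally stated, but if you want it to feed into the main theorem you would need to supplement the lower-bound estimate in the favorable case (e.g.\ showing $p\ge m^k$ when $c_k>1$ or the penultimate nonzero coefficient is positive) to recover the case split.
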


\begin{proof}
  For convenience, we put $a_i=|c_i|$ and we write the reduced Alexander polynomial as follows: $$\Delta_{K}^{0}(t)=\sum \limits_{\underset{}{i=0}}^{k} \pm a_i t^i,$$ where $k$ is even, $a_{\frac{k}{2}}$ is odd, $a_0>0$ and for each $i=0,\dots ,k$, $a_i=a_{k-i}$. Hence $a_0=a_k\ne 0$ (see the proof of Corollary 6.11 in \cite{Lickorish_book}). Note that, $c_0=a_0=a_k=c_k$. The signs assigned to the non-null $a_i$ do not necessarily alternate as in the case of the knot $10_{145}$:
  $$\Delta_{10_{145}}^{0}(t)=1+t-3t^2+t^3+t^4.$$
  So we must distinguish two cases:
\begin{enumerate}
\item Suppose that the assigned signs to non-null $a_i$ alternate, two cases can occur:
  \begin{itemize}
  \item \underline{Case 1} If for each $i=0,\dots ,k$, $a_i\ne 0$, then we factor $\Delta_{K}^0(t)$ as follows
$$\Delta_{K}^0(t)=a_0+(t-a_1)t+(a_2-1)t^2+\cdots+(t-a_{k-1})t^{k-1}+(a_k-1)t^k .$$
For each $m> \displaystyle \max_{0\leq i \leq k} \lbrace a_i \rbrace$, by evaluating the reduced Alexander polynomial $\Delta_{K}^0(t)$ at $m$ we get:
$$p=\Delta_{K}^0(m)=a_0+(m-a_1)m+(a_2-1)m^2+\cdots+(m-a_{k-1})m^{k-1}+(a_k-1)m^k .$$
Since the integer $m>\displaystyle \max_{0\leq i \leq k} \lbrace a_i\rbrace$, then $p$ is a positive integer which can be written as follows 
$$
p= \left\{
    \begin{array}{ll}
        \sum \limits_{\underset{}{i=0}}^{k} d_i m^i & \mbox{if} \ a_k>1 \\
        \sum \limits_{\underset{}{i=0}}^{k-1} d_i m^i & \mbox{if} \ a_k=1,
    \end{array}
\right.
$$
where for all $0\leq i \leq k$
$$
d_i = \left\{
    \begin{array}{ll}
        m-a_i & \mbox{if i is odd } \\
        a_i-1 & \mbox{if i is even},\, i\ne 0\\
        a_0 & \mbox{if} \ i=0.
    \end{array}
\right.
$$
Since $\forall i \in \lbrace 0,..., k \rbrace$, $0\leq d_i < m$,
$d_k\geq 1$ if $a_k> 1$ and $d_{k-1}\geq1$ if $a_k=1$, then for all $m>\displaystyle \max_{0\leq i \leq k} \lbrace a_i \rbrace$
$$
p= \left\{
    \begin{array}{ll}
        (d_{k}d_{k-1}\cdot\cdot\cdot d_1 d_0)_m & \mbox{if} \ a_k>1 \\
        (d_{k-1}d_{k-2}\cdot\cdot\cdot d_1 d_0)_m & \mbox{if} \ a_k=1,
    \end{array}
\right.
$$
is the base $m$ expansion of the integer $p$. By Theorem 10.8.1 in \cite{adhikari} we have 
$$
\lfloor \log_mp \rfloor = \left\{
    \begin{array}{ll}
        k & \mbox{if} \ c_k =a_k> 1\\
        k-1 & \mbox{if} \ c_k=a_k=1.
    \end{array}
\right.
$$
\item \underline{Case 2} Now suppose that there exists at least one null coefficient $a_i$.\\
 We group each pair of two non-zero consecutive monomials starting with the first non-null monomial of degree greater than or equal to $1$ as follows
$$\Delta_{K}^0(t)=a_0-\underbrace{a_1t+a_2t^2}-\cdots-\underbrace{a_jt^j+a_lt^l}-\cdots-\underbrace{a_{k-1}t^{k-1}+a_kt^k}.$$
Suppose that there exists at least a binomial $-a_jt^j+a_lt^l$ such that $l> j+1$. Within these binomials we add expressions $t^r-t^r$ where $r$ corresponds to the missing degrees as follows
$$-a_jt^j+t^{j+1}-t^{j+1}+\cdots+t^{l-1}-t^{l-1}+a_lt^l.$$
We factor the expression above as follows
$$(t-a_j)t^j+(t-1)t^{j+1}+\cdots+(t-1)t^{l-2}+(t-1)t^{l-1}+(a_l-1)t^l,$$
and we factor the binomials of the form $-a_st^s+a_{s+1}t^{s+1}$ as follows
$$(t-a_s)t^s+(a_{s+1}-1)t^{s+1}.$$
For each $m>\displaystyle \max_{0\leq i \leq k} \lbrace a_i \rbrace$, by evaluating the reduced Alexander polynomial $\Delta_{K}^0(t)$ at $m$ we get:
\begin{align*}
p&=\Delta_{K}^0(m)\\ &=a_0+(m-a_1)m+(a_2-1)m^2+\cdots+(m-a_{j})m^{j}+(m-1)m^{j+1}\\
             &+\cdots+(m-1)m^{l-1}+(a_l-1)m^l+\cdots+(m-a_{k-1})m^{k-1}+(a_k-1)m^k.\\
\end{align*}
Since the integer $m>\displaystyle \max_{0\leq i \leq k} \lbrace a_i \rbrace$, then $p$ is a positive integer which can be written as follows 
$$
p= \left\{
    \begin{array}{ll}
        \sum \limits_{\underset{}{i=0}}^{k} d_i m^i & \mbox{if} \ a_k>1 \\
        \sum \limits_{\underset{}{i=0}}^{k-1} d_i m^i & \mbox{if} \ a_k=1,
    \end{array}
\right.
$$
where for all $i=0,\dots ,k$, $d_i\in\lbrace a_0,m-a_i, m-1, a_i-1\rbrace$.\\
Since $\forall i \in \lbrace 0,..., k \rbrace$, $0\leq d_i < m$, $d_k\geq 1$ if $a_k>1$ and $d_{k-1}\geq 1$ if $a_k=1$, then for all $m>\displaystyle \max_{0\leq i \leq k} \lbrace a_i \rbrace$
$$
p= \left\{
    \begin{array}{ll}
        (d_{k}d_{k-1}\cdot\cdot\cdot d_1 d_0)_m & \mbox{if} \ a_k>1 \\
        (d_{k-1}d_{k-2}\cdot\cdot\cdot d_1 d_0)_m & \mbox{if} \ a_k=1
    \end{array}
\right.
$$
is the base $m$ expansion of the integer $p$. Once again, by Theorem 10.8.1 in \cite{adhikari} we get
$$
\lfloor \log_mp \rfloor = \left\{
    \begin{array}{ll}
        k & \mbox{if} \ c_k=a_k > 1\\
        k-1 & \mbox{if} \ c_k=a_k=1.
    \end{array}
\right.
$$
\end{itemize}
\item Suppose that some consecutive non-null coefficients $a_i$ have opposite assigned signs. Since $\Delta_{K}^{0}(1)=1$, then there is at least one coefficient with negative assigned sign in $\Delta_{K}^0(t)$. By reading the monomials in ascending order of their degrees, each time we encounter an expression of the form
$$-a_jt^j-a_{j+1}t^{j+1}-\cdots-a_{j+l-1}t^{j+l-1}+a_{j+l}t^{j+l},$$
where $j+l\leq k$, $a_j\ne 0$ and $a_{j+l}\ne 0$, we factor it as follows without changing the other monomials:

\begin{equation}\label{Eq_1}
(t-a_j)t^j+(t-(a_{j+1}+1))t^{j+1}+\cdots+(t-(a_{j+l-1}+1))t^{j+l-1}+(a_{j+l}-1)t^{j+l}.
\end{equation}

Here we distinguish three cases depending on the last expression of the form (\ref{Eq_1}) occurring in the factored expression of $\Delta_K^0(t)$:
\begin{itemize}
\item If $j+l< k$, then for each $m>\displaystyle \max_{0\leq i \leq k} \lbrace a_i \rbrace$, we evaluate the factored reduced Alexander polynomial $\Delta_{K}^0(t)$ at $m$. Then we get a positive integer which can be written as follows 
$$p=\Delta_{K}^0(m)= \sum \limits_{\underset{}{i=0}}^{k} d_i m^i,$$
where for all $i=0,\dots , k$, $d_i\in\lbrace m-a_i, m-(a_i+1),a_i-1,a_i\rbrace$.\\
Since $\forall i $, $0\leq d_i < m$ and $d_k\geq 1$, then for all $m>\displaystyle \max_{0\leq i \leq k} \lbrace a_i \rbrace$, $$p= (d_{k}d_{k-1}\cdot\cdot\cdot d_1 d_0)_m$$ is the base $m$ expansion of the integer $p$. By Theorem 10.8.1 in \cite{adhikari} we have
$$\lfloor \log_mp \rfloor=k.$$
\item If $j+l=k$ and $j=k-1$, then for each $m>\displaystyle \max_{0\leq i \leq k} \lbrace a_i \rbrace$  we evaluate the factored reduced Alexander polynomial $\Delta_{K}^0(t)$ at $m$. We get  a positive integer which can be written as follows 
$$
p=\Delta_{K}^0(m) = \left\{
    \begin{array}{ll}
        \sum \limits_{\underset{}{i=0}}^{k} d_i m^i & \mbox{if} \ a_k>1  \\
        \sum \limits_{\underset{}{i=0}}^{k-1} d_i m^i & \mbox{if} \ a_k=1,
    \end{array}
\right.
$$
where for all $i=0,\dots , k$, $d_i\in\lbrace m-a_i, m-(a_i+1),a_i-1,a_i\rbrace$.\\
Since $\forall i \in \lbrace 0,..., k \rbrace$, $0\leq d_i < m$, $d_k\geq 1$ if $a_k>1$ and $d_{k-1}\geq1$ if $a_k=1$, then for all $m>\displaystyle \max_{0\leq i \leq k} \lbrace a_i \rbrace$,
$$
p= \left\{
    \begin{array}{ll}
        (d_{k}d_{k-1}\cdot\cdot\cdot d_1 d_0)_m & \mbox{if} \ a_k>1 \\
        (d_{k-1}d_{k-2}\cdot\cdot\cdot d_1 d_0)_m & \mbox{if} \ a_k=1,
    \end{array}
\right.
$$
is the base $m$ expansion of the integer $p$. Also, by Theorem 10.8.1 in \cite{adhikari} we have 
$$
\lfloor \log_mp \rfloor = \left\{
    \begin{array}{ll}
        k & \mbox{if} \ c_k =a_k> 1\\
        k-1 & \mbox{if} \ c_k=a_k=1.
    \end{array}
\right.
$$
\item If $j+l=k$ and $j<k-1$ then for each $m>\displaystyle \max_{0\leq i \leq k} \lbrace a_i\rbrace +1$, we evaluate the factored reduced Alexander polynomial $\Delta_{K}^0(t)$ at $m$. Then $\Delta_{K}^0(m)$ is a positive integer which can be written as follows 
$$
\Delta_{K}^0(m) = \left\{
    \begin{array}{ll}
        \sum \limits_{\underset{}{i=0}}^{k} d_i m^i & \mbox{if} \ a_k>1  \\
        \sum \limits_{\underset{}{i=0}}^{k-1} d_i m^i & \mbox{if} \ a_k=1,
    \end{array}
\right.
$$
where for all $i=0,\dots ,k$, $d_i\in\lbrace m-a_i, m-(a_i+1),a_i-1,a_i\rbrace$.\\
Since $\forall i \in \lbrace 0,..., k \rbrace$, $0\leq d_i < m$, $d_k\geq 1$ if $a_k>1$ and $d_{k-1}\geq1$ if $a_k=1$, then for all $m>\displaystyle \max_{0\leq i \leq k} \lbrace a_i \rbrace+1$
$$
p=\Delta_{K}^0(m) = \left\{
    \begin{array}{ll}
        (d_{k}d_{k-1}\cdot\cdot\cdot d_1 d_0)_m & \mbox{if} \ a_k>1 \\
        (d_{k-1}d_{k-2}\cdot\cdot\cdot d_1 d_0)_m & \mbox{if} \ a_k=1,
    \end{array}
\right.
$$
is the base $m$ expansion of the integer $p$. Then by Theorem 10.8.1 in \cite{adhikari}, we have
$$
\lfloor \log_mp \rfloor = \left\{
    \begin{array}{ll}
        k & \mbox{if} \ c_k =a_k> 1,\\
        k-1 & \mbox{if} \ c_k=a_k=1.
    \end{array}
\right.
$$
\end{itemize}
\end{enumerate}
\end{proof}

\begin{Rk}\label{Rk_MainTheo}
  The proof shows that the condition $m>\displaystyle \max_{0\leq i \leq k} \lbrace |c_i|\rbrace+1$ is needed in the only one case where the non-null coefficients do not alternate and the two penultimate non-null coefficients have negative signs (as in the last subcase studied in the proof of the last Lemma). Otherwise the weaker condition $m>\displaystyle \max_{0\leq i \leq k} \lbrace |c_i|\rbrace$ suffices.
\end{Rk}

The lemma \ref{Claim.1} leads to the following improvement of Theorem \ref{Theo1}.
\begin{Theo}\label{MainTheo}
  Let $K$ be a knot. Let $\Delta_{K}^{0}(t)=\sum \limits_{\underset{}{i=0}}^{k} c_i t^i$ be the reduced Alexander polynomial of $K$. Let $m$ be an integer, such that $m>\displaystyle \max_{0\leq i \leq k} \lbrace |c_i|\rbrace +1$ and $p=\Delta_{K}^{0}(m)$ is an odd prime integer.
\begin{enumerate}
\item If $c_k=1$ and the penultimate non-zero coefficient is negative, then
  $$k+1 \leq mincol_{p,m}(K).$$
\item If $c_k>1$ or the penultimate non-zero coefficient is positive, then
  $$k+2\leq mincol_{p,m}(K).$$
\end{enumerate}
\end{Theo}

\begin{proof}
  We apply Theorem \ref{Theo1} and then we apply Lemma \ref{Claim.1} to the obtained lower bound $(2 + \lfloor \log_m p \rfloor)$.
\end{proof}
\begin{Exp}
The trefoil knot has the reduced Alexander polynomial $\Delta_{3_1}^0(t)=1-t+t^2$. For any integer $m$, if $p=\Delta_{3_1}^0(m)$ is an odd prime, then the trefoil knot admits non-trivial $(p,m)$-colorings. Furthermore, since the leading coefficient of $\Delta_{3_1}^0(t)$ is $1$ and its penultimate coefficient is negative, then by Theorem \ref{MainTheo} and Remark \ref{Rk_MainTheo}, for any $m>1$, we have
$mincol_{p,m}(3_1)\ge 3$.\\
Since the crossing number of the knot $3_1$ is $3$ then for all $m>1$,  $mincol_{p,m}(3_1)= 3$.\\
The following table displays some prime values for the reduced Alexander polynomial $\Delta_{3_1}^0(m)$.

\begin{table}[htbp]
  \centering
  \begin{tabular}[c]{|p{2cm}|p{1cm}|p{1cm}|p{1cm}|p{1cm}|p{1cm}|p{1cm}|p{1cm}|p{1cm}|}
 \hline
 $m$ & $2$ & $3$ & $4$ & $6$ & $7$ & $9$ & $13$ & $15$\\
 \hline
 $\Delta_{3_1}^0(m)$ & $3$ & $7$ & $13$& $31$ & $43$  & $73$ & $157$ & $211$\\
\hline
  \end{tabular}
  \caption{Prime values of $\Delta_{3_1}^0(m)$.}
\end{table}
\end{Exp}

\section{Some applications}
For any rational homology 3-sphere $M$, the rank of  the Heegaard Floer homology $\widehat{HF}(M)$ is lower bounded by the order of $H_1(M;\Z)$ \cite{ozsv_szab}. If the rank of $\widehat{HF}(M)$ is equal to the order of $H_1(M;\Z)$ then $M$ is called an \textit{L-space} \cite{ozsv_szab}.\\
A knot $K$ in $S^3$ is called an \textit{L-space knot} if some positive surgery on $K$ gives a 3-manifold that is an L-space. Torus knots are L-space knots (see \cite{Osvath_2} page 379).\\

Our main Theorem \ref{MainTheo} provides the interesting following corollary.
\begin{Cor}\label{Cor_MainTheo}
Let $K$ be an L-space knot. If $m$ is an integer such that $m>1$ and $p=\Delta_{K}^{0}(m)$ is an odd prime, then
$$k+1 \leq mincol_{p,m}(K).$$
\end{Cor}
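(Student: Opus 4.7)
The plan is to reduce the corollary to Theorem \ref{MainTheo} by exploiting the very restrictive form of the Alexander polynomial of an L-space knot. The key input is the theorem of Ozsv\'ath--Szab\'o which states that if $K$ is an L-space knot, then the (reduced) Alexander polynomial $\Delta_K^0(t)$ has all of its coefficients in $\{-1,0,1\}$, and the non-zero coefficients alternate in sign. Combined with the normalization recalled in the first remark of the Preliminaries section ($c_0>0$, $c_0=c_k$, $k$ even), this forces $c_0=c_k=1$ and forces the penultimate non-zero coefficient to equal $-1$, hence to be negative.

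Next I would check that the hypothesis $m>1$ in the corollary is enough to invoke part (1) of Theorem \ref{MainTheo}. The statement of that theorem is written with the hypothesis $m>\max_i|c_i|+1$, but Remark \ref{Rk_MainTheo} makes clear that the $+1$ is only needed in the pathological sub-case where consecutive non-null coefficients have the same negative sign; as soon as the non-null coefficients alternate, the weaker inequality $m>\max_i|c_i|$ suffices. For an L-space knot we have $\max_i|c_i|=1$, so the condition becomes $m>1$, which is exactly the hypothesis of the corollary.

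With these two observations in place, the proof is then a direct application of part (1) of Theorem \ref{MainTheo}: the coefficients satisfy $c_k=1$ with penultimate non-zero coefficient negative, $p=\Delta_K^0(m)$ is an odd prime by hypothesis, and $m$ satisfies the relevant bound. The conclusion $k+1\leq mincol_{p,m}(K)$ follows immediately.

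The only step requiring any real care is the appeal to the Ozsv\'ath--Szab\'o classification and the reconciliation of the bound on $m$ with Remark \ref{Rk_MainTheo}; the rest is essentially citation. In the writeup I would state the Ozsv\'ath--Szab\'o result explicitly (with a reference to the relevant paper), explain in one sentence why alternation forces $c_k=1$ and the penultimate coefficient to be $-1$, and then invoke Theorem \ref{MainTheo}(1) via Remark \ref{Rk_MainTheo}.
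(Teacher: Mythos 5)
Your proposal is correct and follows essentially the same route as the paper: invoke the Ozsv\'ath--Szab\'o constraint that the non-zero coefficients of the Alexander polynomial of an L-space knot are $\pm 1$ and alternate in sign, observe that this (with the normalization $c_k=c_0>0$) places $K$ in case (1) of Theorem \ref{MainTheo}, and use Remark \ref{Rk_MainTheo} to relax the hypothesis on $m$ to $m>\max_i|c_i|=1$. Your writeup is in fact somewhat more explicit than the paper's own proof about why $c_k=1$ and why the penultimate non-zero coefficient is negative, but the argument is the same.
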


\begin{proof}
We know that each non-zero coefficient of the Alexander polynomial of any L-space knot  is $\pm 1$ and they alternate in sign \cite{ozsv_szab}. It is easy to see that this holds also for the reduced Alexander polynomial. Then by Theorem \ref{MainTheo} and Remark \ref{Rk_MainTheo}, we have for each integer $m>1$ such that $p=\Delta_{K}^{0}(m)$ is an odd prime
$$k+1 \leq mincol_{p,m}(K).$$
\end{proof}

By applying Corollary \ref{Cor_MainTheo} to some families of knots, we get the following propositions.\\

\noindent{\textbf{Torus knots.}}\\
We recall that if $T_{a,b}$ is a torus knot, then the non-zero coefficients of the reduced Alexander polynomial $\Delta_{T_{a,b}}^{0}(t)$ are all $\pm 1$, and they alternate in sign \cite{Osvath_2}.
\begin{Pro}\label{Pro-1_MainTheo}
Let $T_{a,b}$ be a torus knot. Let $m$ be an integer such that, $m>1$ and $p=\Delta_{T_{a,b}}^{0}(m)$ is an odd prime, then 

$$c(T_{a,b})-(a-2)\leq mincol_{p,m}(T_{a,b})\leq c(T_{a,b}),$$
where $c(T_{a,b})$ is the crossing number of the torus knot $T_{a,b}$.\\
In particular

$$mincol_{p,m}(T_{2,b})=c(T_{2,b}).$$
\end{Pro}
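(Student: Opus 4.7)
The plan is to combine Corollary \ref{Cor_MainTheo} with the classical formulas for the reduced Alexander polynomial and the crossing number of a torus knot. Since $T_{a,b}$ is an L-space knot (recalled just before Corollary \ref{Cor_MainTheo}), Corollary \ref{Cor_MainTheo} yields directly the lower bound $k + 1 \leq mincol_{p,m}(T_{a,b})$, where $k = \deg \Delta^{0}_{T_{a,b}}(t)$. From the classical formula $\Delta_{T_{a,b}}(t) = \frac{(t^{ab}-1)(t-1)}{(t^a-1)(t^b-1)}$ one reads $k = (a-1)(b-1)$, and combined with the standard value $c(T_{a,b}) = b(a-1)$ (valid for $2 \leq a < b$, $\gcd(a,b)=1$), a short algebraic identity gives
\[
k + 1 \;=\; (a-1)(b-1) + 1 \;=\; b(a-1) - (a-2) \;=\; c(T_{a,b}) - (a-2),
\]
which is exactly the claimed lower bound.

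For the upper bound, I would use the standard braid diagram of $T_{a,b}$, obtained as the closure of $(\sigma_1 \sigma_2 \cdots \sigma_{a-1})^b$; this diagram realizes the crossing number and therefore has exactly $c(T_{a,b})$ arcs. Because $p = \Delta^{0}_{T_{a,b}}(m)$, the knot admits at least one non-trivial $(p,m)$-coloring, and any such coloring of this minimal diagram uses at most as many colors as it has arcs, giving $mincol_{p,m}(T_{a,b}) \leq c(T_{a,b})$.

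The final assertion is immediate: setting $a = 2$ annihilates the correction term $(a - 2)$, so the lower and upper bounds coincide and force $mincol_{p,m}(T_{2,b}) = c(T_{2,b})$.

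There is no substantive obstacle in this argument; the proposition is essentially a clean arithmetic consequence of Corollary \ref{Cor_MainTheo} combined with the well-known degree of $\Delta^0_{T_{a,b}}$ and the value of $c(T_{a,b})$. The only minor subtleties are keeping straight the convention $a < b$ when invoking the crossing number formula, and observing that the existence of a non-trivial $(p,m)$-coloring is automatic from $p = \Delta^{0}_{T_{a,b}}(m)$ so that $mincol_{p,m}(T_{a,b})$ is indeed defined.
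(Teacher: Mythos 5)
Your proposal is correct and follows essentially the same route as the paper: both derive the lower bound from Corollary \ref{Cor_MainTheo} together with $k=(a-1)(b-1)$ and Murasugi's formula $c(T_{a,b})=b(a-1)$ for $a<b$, and both get the upper bound from a minimal-crossing diagram (the paper simply calls this step ``obvious'' where you spell out the braid-closure argument). No substantive differences.
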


\begin{proof} We note that $\Delta_{T_{-a,b}}^{0}(t)=\Delta_{T_{a,-b}}^{0}(t)=\Delta_{T_{-a,-b}}^{0}(t)=\Delta_{T_{a,b}}^{0}(t)$. Then, by Corollary \ref{Cor_MainTheo}, for any $m$ as in the statement of the proposition we have 
\begin{equation}\label{Eq_2}
k+1\leq mincol_{p,m}(T_{a,b}),
\end{equation}
where $k$ is the degree of $\Delta_{T_{a,b}}^{0}(t)$. Since the torus knot $T_{a,b}$ is equivalent to the torus knot $T_{b,a}$, we can restrict our computation to the case where $a<b$.\\
On the other hand, the reduced Alexander polynomial of the torus knot can be written
$\Delta_{T_{a,b}}(t)=\dfrac{f(t^b)}{f(t)}$ where $f(t)= 1+\cdot\cdot\cdot+t^{a-1}$ \cite{Agle}. It follows that
$$
\begin{array}[c]{ccc}
k &=&(a-1)(b-1)\\
  &=&ab-a-b+1\\
  &=&b(a-1)-(a-1).\\
\end{array}
$$
It has been proven by Murasugi in \cite{Murasugi} that $c(T_{a,b})=min\lbrace a(b-1),b(a-1) \rbrace$ which is exactly $b(a-1)$ since we assumed that $a<b$. Then $k=c(T_{a,b})-(a-1)$ . By replacing $k$ in (\ref{Eq_2}), we get
$$c(T_{a,b})-(a-2)\leq mincol_{p,m}(T_{a,b}).$$
The right inequality is obvious. Finally we have 

$$c(T_{a,b})-(a-2)\leq mincol_{p,m}(T_{a,b})\leq c(T_{a,b}).$$
If $a=2$ then $mincol_{p,m}(T_{2,b})= c(T_{2,b})$.
\end{proof}
\noindent\textbf{Pretzel knots $P(-2, 3, 2l + 1)$.}\\
Lidman and Moore showed that $P(-2, 3, 2l + 1)$, $l\ge 0$, are the only L-space Pretzel knots \cite{Lidman_Moore}.\\
In what follows, we prove that if we color a diagram of the Pretzel knot $P(-2, 3, 2l + 1)$, $l \geq 0$, by the linear Alexander quandle $\Z_p[t]/(t-2)$ where $p$ is an odd prime, then the lower bound in Theorem \ref{MainTheo} is reached. We need the following lemma.

\begin{Lem}\label{Claim.4}
For any odd integer $a\geq 1$, the Alexander polynomial of the Pretzel knot $K=P(-2,3,a)$ can be written as follows

$$\Delta_{K}(t)=1-t+\sum \limits_{\underset{}{i=3}}^{a} (-1)^{i+1} t^i-t^{a+2}+t^{a+3}.$$
\end{Lem}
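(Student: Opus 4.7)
My approach is induction on the odd integer $a\geq 1$, advancing by $2$ at each step, using the Alexander skein relation applied at a crossing inside the last twist region of a standard diagram of $K=P(-2,3,a)$. Changing one positive crossing to negative in the $(a+2)$-twist region of $P(-2,3,a+2)$, followed by a Reidemeister~II cancellation, produces $L_-=P(-2,3,a)$, while the oriented smoothing produces the pretzel link $L_0=P(-2,3,a+1)$, which is a $2$-component link rather than a knot since both $-2$ and $a+1$ are even. The skein relation then couples the three Alexander polynomials.

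For the base case I would take $a=1$ and verify the claimed formula directly from the Alexander matrix of a standard diagram, as illustrated for the knot $7_3$ in the preliminaries. For the inductive step, assuming the formula for $\Delta_{P(-2,3,a)}(t)$, one combines the skein identity with a parallel computation of $\Delta_{P(-2,3,a+1)}(t)$; the natural way to obtain the latter is to run a second, lockstep induction over the auxiliary family of $2$-component pretzel links $\{P(-2,3,2l)\}_{l\geq 1}$, each skein step advancing one of the two families and feeding the other.

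The main obstacle is precisely this parallel handling of the link family, whose Alexander polynomials are not themselves the subject of the lemma but enter essentially in the inductive step, and whose normalization (with its $(1-t)$ factor coming from the multivariable reduction recalled in the preliminaries) must be tracked carefully. An alternative that avoids the link family altogether is to compute $\Delta_{P(-2,3,a)}(t)$ directly from the Alexander matrix of a standard diagram: the matrix decomposes into small fixed blocks for the $-2$- and $3$-tangles together with a near-tridiagonal $a\times a$ block for the last tangle, and cofactor expansion along this block produces a two-term linear recurrence in $a$ whose closed-form solution is precisely the alternating geometric-type sum $\sum_{i=3}^{a}(-1)^{i+1}t^i$ appearing in the statement. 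Either route is consistent with the Ozsv\'ath--Szab\'o characterization of L-space knot Alexander polynomials cited in Section~4, which predicts exactly this alternating-sign, $\pm 1$-coefficient pattern for $P(-2,3,a)$ when $a=2l+1$ with $l\geq 0$.
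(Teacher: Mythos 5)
Your skein-theoretic setup is geometrically sound (in the standard diagram of $P(-2,3,a+2)$ the two strands of the last twist region are coherently oriented, so the oriented resolution really does give the $2$-component pretzel link $P(-2,3,a+1)$ while the crossing change plus Reidemeister~II gives $P(-2,3,a)$), and it is a genuinely different route from the paper's. The paper uses no skein theory: it quotes Hironaka's closed rational formula
$\Delta_{P(p,q,-2)}(t)=\bigl(1+2t+t^{1+p}+t^{1+q}-t^3-t^{p+q}+t^{p+2}+t^{q+2}+2t^{p+q+2}+t^{3+p+q}\bigr)/(1+t)^3$
and runs a single induction on $l$ (with $a=2l+1$) whose inductive step is the purely algebraic identity $\Delta_{P(-2,3,2l+3)}(t)=t^2\,\Delta_{P(-2,3,2l+1)}(t)+\bigl(1-t-t^2+2t^3-t^4\bigr)$. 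That buys a short computational proof with no auxiliary link family and no normalization bookkeeping; your route, if completed, would be more self-contained but substantially longer.

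As written, though, your plan has two genuine gaps. First, the inductive step cannot close without an explicit formula, with its own proof, for the Conway polynomials of the auxiliary links $P(-2,3,2l)$; you correctly flag this as ``the main obstacle'' but supply neither the candidate formula, the second induction, nor the unit/normalization bookkeeping needed to pass from the Conway polynomial back to the paper's reduced Alexander polynomial, so the core of the argument is missing (the same is true of the unexecuted tridiagonal-recurrence alternative, and the Ozsv\'ath--Szab\'o sign pattern is only a consistency check, not a derivation of the formula). Second, your chosen base case $a=1$ would actually fail: $P(-2,3,1)$ is the torus knot $T(2,5)$, whose Alexander polynomial is $1-t+t^2-t^3+t^4$, whereas the stated formula at $a=1$ reads $1-t-t^3+t^4$, which vanishes at $t=1$ and so cannot be the Alexander polynomial of any knot. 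This is also a defect of the Lemma as stated for $a=1$ --- note that the paper's own induction quietly begins at $l=1$, i.e.\ $a=3$ --- and only the degree statement $k=a+3$ (which is all the subsequent Proposition uses) survives at $a=1$. You would need to start your induction at $a=3$, or treat $a=1$ separately and amend the statement.
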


\begin{proof}
Since $a$ is an odd integer then it can be expressed as $a=2l+1$, where $l\ge 0$. We will proof by induction that, for all  $l\in \Z$,

$$\Delta_{P(-2,3,2l+1)}(t)=1-t+\sum \limits_{\underset{}{i=3}}^{2l+1} (-1)^{i+1} t^i-t^{2l+3}+t^{2l+4}.$$
It is known that the Alexander polynomial of the Pretzel knot $P(p,q,-2)$, where $p$ and $q$ are odd integers, is given by (see \cite{Hironaka})
\begin{equation}\label{Alexander_poly_Pretzel_knots}
\Delta_{P(p,q,-2)}(t) =\dfrac{1+2t+t^{1+p}+t^{1+q}- t^3 -t^{p+q} + t^{p+2} + t^{q+2} + 2t^{p+q+2} + t^{3+p+q}}{(1+t)^3}.
\end{equation}
\\
Note that $P(p,q,-2)$ and $P(-2,p,q)$ are equivalent (see \cite{Kawauchi_book}, Paragrapah 2.3).\\
If $l=1$ 

$$\Delta_{P(-2,3,3)}(t)=\dfrac{1+2t-t^3+2t^4+2t^5-t^6+2t^8+t^9}{(1+t)^3}=1-t+t^3-t^5+t^6.$$
Now suppose that the formula is right for $l$, i.e.
$$\Delta_{P(-2,3,2l+1)}(t)=1-t+\sum \limits_{\underset{}{i=3}}^{2l+1} (-1)^{i+1} t^i-t^{2l+3}+t^{2l+4},$$
and show that

$$\Delta_{P(-2,3,2l+3)}(t)=1-t+\sum \limits_{\underset{}{i=3}}^{2l+3} (-1)^{i+1} t^i-t^{2l+5}+t^{2l+6}.$$
By using formula (\ref{Alexander_poly_Pretzel_knots}), we can write
\begin{align*}
\Delta_{P(-2,3,2l+3)}(t) &=\dfrac{1+2t+t^{2l+4}+t^4-t^3-t^{2l+6}+t^{2l+5}+t^5+2t^{2l+8}+t^{2l+9}}{(1+t)^3}\\
             &=\dfrac{1+2t+t^4-t^3+t^5+t^2(t^{2l+2}-t^{2l+4}+t^{2l+3}+2t^{2l+6}+t^{2l+7})}{(1+t)^3}\\
             &=\dfrac{1+2t-t^3+t^4+t^5-t^2(1+2t-t^3+t^4+t^5)}{(1+t)^3}+t^2\Delta_{P(-2,3,2l+1)}(t)\\
             &=\dfrac{1+2t-t^2-3t^3+t^4+2t^5-t^6-t^7}{(1+t)^3}+t^2\Delta_{P(-2,3,2l+1)})(t)\\
             &=1-t-t^2+2t^3-t^4+t^2(1-t+\sum \limits_{\underset{}{i=3}}^{2l+1} (-1)^{i+1} t^i-t^{2l+3}+t^{2l+4})\\
             &=1-t+t^3-t^4+\sum \limits_{\underset{}{i=3}}^{2l+1} (-1)^{i+1} t^{i+2}-t^{2l+5}+t^{2l+6}\\
             &=1-t+t^3-t^4+\sum \limits_{\underset{}{i=5}}^{2l+3} (-1)^{i-1} t^{i}-t^{2l+5}+t^{2l+6}\\
             &=1-t+\sum \limits_{\underset{}{i=3}}^{2l+3} (-1)^{i-1} t^{i}-t^{2l+5}+t^{2l+6}.\\
\end{align*}
Since $\forall i$, $(-1)^{i+1}=(-1)^{i-1}$ then
$$\Delta_{P(-2,3,2l+3)}(t)=1-t+\sum \limits_{\underset{}{i=3}}^{2l+3} (-1)^{i+1} t^i-t^{2l+5}+t^{2l+6}.$$
\end{proof}

\begin{Pro}
Let $K=P(-2,3,a)$ be a pretzel knot where $a$ is an odd positive integer. Let $m> 1$ be an integer such that $p=\Delta_{K}^{0}(m)$ is an odd prime. If $\Delta_{K}^{0}(m)\neq 0$ then
$$a+4\leq mincol_{p,m}(K).$$
In particular, if $m=2$ then
$$a+4 = mincol_{p,m}(K).$$
\end{Pro}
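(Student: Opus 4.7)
The plan is to handle the lower bound and the equality at $m=2$ separately. For the lower bound, I would first substitute the explicit formula of Lemma \ref{Claim.4} for $\Delta_K(t)$ into the hypotheses of Theorem \ref{MainTheo}. Since the Laurent polynomial
$$\Delta_K(t)=1-t+\sum_{i=3}^{a}(-1)^{i+1}t^i-t^{a+2}+t^{a+3}$$
is palindromic of even degree $a+3$, has positive constant term $1$, and has odd middle coefficient $\pm 1$, it already coincides with the reduced Alexander polynomial $\Delta_K^0$ in the sense of the first Remark of Section 2. So $k=a+3$, the leading coefficient is $c_k=1$, the non-zero coefficients are all $\pm 1$ and (by a direct inspection of the explicit formula, after filling in the gap at degrees $2$ and $a+1$) alternate in sign, and the penultimate non-zero coefficient $c_{a+2}=-1$ is negative. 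Because the non-zero coefficients alternate, Remark \ref{Rk_MainTheo} allows us to relax the condition $m>\max_i |c_i|+1$ in Theorem \ref{MainTheo} to the weaker condition $m>\max_i |c_i|=1$. Hence, for every $m>1$ for which $p=\Delta_K^0(m)$ is an odd prime, the hypotheses of Theorem \ref{MainTheo} Part (1) are satisfied and yield $a+4=k+1\leq mincol_{p,m}(K)$, which is the required lower bound.

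For the matching upper bound when $m=2$, the plan is to exhibit a non-trivial $(p,2)$-coloring of the standard pretzel diagram of $P(-2,3,a)$ that uses exactly $a+4$ distinct colors. This diagram has $a+5$ crossings and $a+5$ arcs distributed over the three pretzel tangles of lengths $2$, $3$ and $a$. I would label the arcs so that their colors form short arithmetic progressions along each twist region (parametrized by two free variables coming from the incoming strands), then impose the Alexander relations $z=2x-y\pmod{p}$ at every crossing. The resulting homogeneous system modulo $p$ has a one-parameter family of non-trivial solutions, and I would choose the member of this family in which the relations force exactly one pair of arcs to receive the same color while all other arc colors remain pairwise distinct integers in $\{0,1,\dots,p-1\}$.

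The main obstacle is verifying the \emph{exactly one collision} property: one has to check that the specific identification of two arcs is forced by the relations (providing the upper bound $a+4$), and that none of the remaining arcs coincide modulo $p$. The latter follows from the explicit formulas for the arc colors, which are bounded integer expressions in the two free parameters, together with the fact that $p=\Delta_K^0(2)$ grows exponentially in $a$ and is therefore large enough to preclude any spurious coincidence. Combining this construction with the lower bound established above yields $mincol_{p,2}(K)=a+4$.
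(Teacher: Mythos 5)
Your proposal is correct and follows essentially the same route as the paper: the lower bound comes from Theorem \ref{MainTheo}(1) applied to the alternating $\pm 1$ coefficients read off from Lemma \ref{Claim.4} (the paper packages this step as the L-space-knot Corollary \ref{Cor_MainTheo}), and the upper bound at $m=2$ comes from a forced color collision in the standard $(a+5)$-arc pretzel diagram, where the two-crossing tangle forces an arc color $w=m-1$ that coincides with the normalized color $x=1$ precisely when $m=2$. One simplification you could make: there is no need to verify that the remaining arcs receive pairwise distinct colors, since the already-established lower bound $a+4\leq mincol_{p,2}(K)$ rules out any further coincidences.
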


\begin{proof}
By Theorem \ref{Cor_MainTheo}, we have 
\begin{equation}
k+1\leq mincol_{p,m}(K),
\end{equation}
where $k$ is the degree of the reduced Alexander polynomial.
By Lemma \ref{Claim.4} the degree of the reduced Alexander polynomial of the Pretzel knot $K=P(-2,3,a)$ is $k=a+3$. Finally, we have
$$a+4\leq mincol_{p,m}(K).$$
If $m=2$, it is easy to see that $\Delta_{P(-2,3,a)}^{0}(2)\ne 0$. Furthermore, if $p=\Delta_{P(-2,3,a)}^{0}(2)$ is prime, consider the diagram of the Pretzel knot $P(-2,3,a)$ shown in Fig. \ref{Fig.6}, we prove that this diagram has a non-trivial $(p,m)$-coloring using exactly $a+4$ colors. We assign the colors $x,y,z,w \in \Z_p$ to the four arcs in the left tower.   
\begin{figure}[H]
\centering
	\includegraphics[scale=0.15]{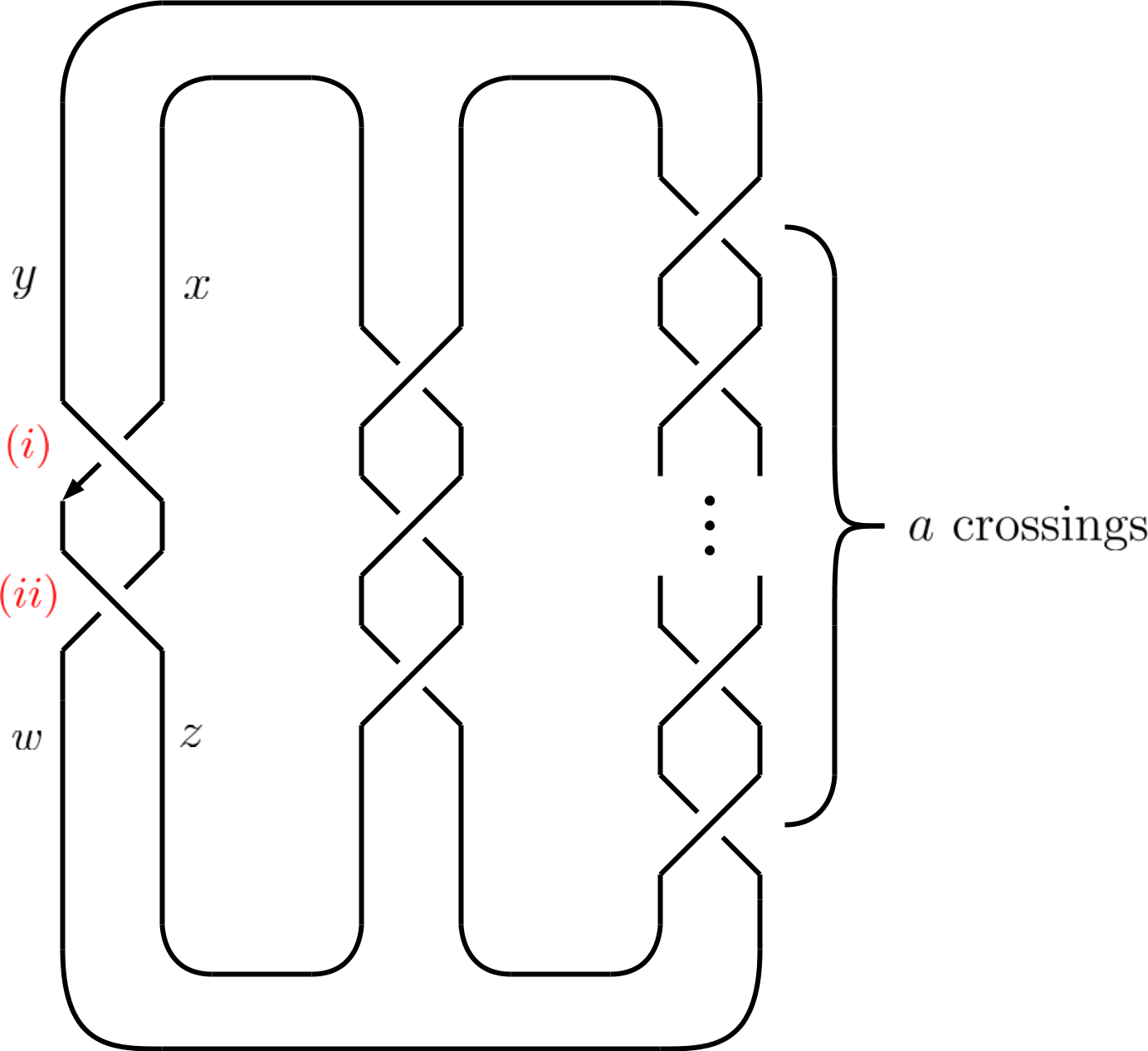} 
	\caption{Pretzel knot $P(-2,3,a)$. \label{Fig.6}}
\end{figure}
Using the coloring conditions associated with the crossings $(i),(ii)$ we get the following system of equations.
$$
\left\{
    \begin{array}{ll}
        z=mx+(1-m)y \pmod p\\
        y=mw+(1-m)z \pmod p.\\
    \end{array}
\right.
$$
Since we assumed that $P(-2,3,a)$ has a non-trivial $(p,m)$-coloring, as for Fox colorings (\cite{Ge} Lemma 2.10), there exists an automorphism of the considered linear Alexander quandle $\Z_p$ providing a new $(p,m)$-coloring containing colors $0$ and $1$. So we can assume that $x=1$ and $y=0$.
$$
\left\{
    \begin{array}{ll}
        z=m \pmod p\\
        0=mw+(1-m)z \pmod p.\\
    \end{array}
\right.
$$
If we replace $z$ in the second equation we obtain $mw=-m(1-m) \pmod p\Rightarrow w=(m-1)\pmod p$. If $m=2$ then $w=1=x$. The number of arcs of the Pretzel knot $P(-2,3,a)$ is equal to $a+5$ and, since two arcs are equipped with the same color, we can conclude that $mincol_{p,2}(K)=a+4$.
\end{proof}

\noindent\textbf{About the Kauffman-Harary conjecture}

We deduce another result from Proposition \ref{Pro-1_MainTheo} regarding what was called Kauffman-Harary conjecture proved by Mattman and Solis in \cite{Mattman_Solis} for Fox colorings.
\begin{Theo}[\cite{Mattman_Solis}]
  Let $D$ be a reduced, alternating diagram of the knot $K$ having prime determinant $p$. Then every non-trivial $p$-coloring of $D$ assigns different colors to different arcs.
\end{Theo}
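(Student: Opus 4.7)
The plan is to argue by contradiction. Assume that $\mathcal{C}$ is a non-trivial $p$-coloring of $D$ in which two distinct arcs $\alpha\neq\beta$ receive the same color, and derive from this that $\mathcal{C}$ must actually be constant, contradicting the non-triviality assumption.

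First I would set up the coloring system over $\mathbb{Z}/p\mathbb{Z}$ using the dihedral relation $x*y=2y-x$. The resulting coloring matrix $A$ has rows indexed by crossings and columns by arcs, with entries $-2,1,1$ in each row; after deleting one row and one column the minor has determinant $\pm p$. In parallel I would translate the situation into the Goeritz picture via a checkerboard shading of $D$: when $D$ is alternating and reduced, every crossing carries the same Goeritz sign, so the Goeritz matrix $G$ (after one deletion) is a signed Laplacian of the connected adjacency graph of the white regions, and $\det G=\pm p$. Tait's theorems for reduced alternating diagrams are what make this sign uniformity legitimate, and they also guarantee that the adjacency graph is connected.

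The next step is to convert the hypothesis $\mathcal{C}(\alpha)=\mathcal{C}(\beta)$ into a divisibility statement. Walking along a path of arcs in $D$ from $\alpha$ to $\beta$ and summing the local coloring relations yields a non-trivial $\mathbb{Z}/p\mathbb{Z}$-linear dependence supported on a proper subset $S$ of the arc set. Dualising this dependence through the arc/region correspondence given by the checkerboard shading produces a non-zero element of $\ker G \bmod p$ supported on a proper subset of the white regions, and hence forces a proper principal minor $G_S$ of $G$ to be divisible by $p$.

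The contradiction is then obtained by combining two facts: by the matrix-tree theorem applied to the signed Laplacian $G$, each proper principal minor of $G$ equals a strictly positive integer bounded above by $\det G=p$; and primality of $p$ forces every such proper minor to be coprime to $p$. This contradicts the divisibility derived above, and hence no two arcs can have shared a color. The main obstacle I expect is this translation step: making the arc-to-region correspondence precise enough that the specific minor forced to vanish mod $p$ can be identified unambiguously, and verifying that the \emph{reduced} hypothesis is exactly what rules out the degenerate configurations (nugatory crossings, disconnected region graphs) in which a smaller minor could incidentally vanish modulo $p$ for reasons having nothing to do with the assumed color collision.
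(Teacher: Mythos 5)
The paper does not prove this statement: it is quoted verbatim from Mattman and Solis \cite{Mattman_Solis} and used as a black box, so there is no in-paper proof to compare yours against. Judged on its own, your outline points in the same general direction as the known proof (reformulating Fox colorings through the checkerboard/Goeritz picture and exploiting Tait's theorems for reduced alternating diagrams), but the two load-bearing steps are not established, and one of them is false as stated.

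First, the ``dualising'' step. Your hypothesis $\mathcal{C}(\alpha)=\mathcal{C}(\beta)$ is equivalent (since the solution space mod $p$ is exactly $2$-dimensional) to the statement that $e_\alpha-e_\beta$ lies in the row space of the coloring matrix mod $p$; but converting this into ``a non-zero element of $\ker G$ supported on a proper subset of the white regions'' is precisely the content of the theorem, not a routine translation. Arcs of $D$ are not in bijection with white regions (an $n$-crossing diagram has $n$ arcs and $n+2$ regions split between the two shades), so there is no canonical ``proper subset $S$ of regions'' attached to a pair of arcs, and you never identify which principal minor is supposed to become divisible by $p$. Second, the closing inequality is wrong in general: it is not true that every proper principal minor of a reduced graph Laplacian is bounded above by its determinant. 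By the all-minors matrix-tree theorem such minors count rooted spanning forests, which can exceed the number of spanning trees --- e.g.\ the reduced Laplacian $\begin{pmatrix}1&-1\\-1&2\end{pmatrix}$ of a path has determinant $1$ but a principal minor equal to $2$. Without the bound $0<\det G_S<p$, divisibility of $\det G_S$ by $p$ yields no contradiction. So the argument as proposed does not close; the actual Mattman--Solis proof replaces both steps by a substantially harder inductive analysis of colorings of the checkerboard graph, and your sketch in effect defers the entire difficulty to the step you yourself flag as ``the main obstacle.''
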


Kauffman and Lopes extended the KH-behavior property to colorings using linear Alexander quandles as follows \cite{Kauff_Lopes}.

\begin{Def}
We say an alternating knot $K$ displays KH behavior mod $(p,m)$, if there is an integer $m$ such that $1 < m < \Delta_{K}^{0}(m)=p$, where $p$ is prime, and for some reduced alternating diagram of $K$ equipped with a non-trivial $(p, m)$-coloring, different arcs receive different colors. Otherwise we say that $K$ displays anti-KH behavior.
\end{Def}

They noted that there exist some knots displaying anti-KH behavior. That means that for such knots, any alternating reduced diagram equipped with non-trivial $(p,m)$-coloring associated to prime determinant have at least two distinct arcs which receive the same color. So, it is interesting to investigate if there are some families of knots that display KH behavior mod $(p,m)$.\\

Proposition \ref{Pro-1_MainTheo} allows to answer for torus knots $ T_{2,b}$.

\begin{Pro}
The torus knot $T_{2,b}$ displays KH-behavior mod $(p,m)$.
\end{Pro}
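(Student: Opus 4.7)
The plan is to obtain the KH-behavior of $T_{2,b}$ as an immediate consequence of Proposition \ref{Pro-1_MainTheo}, combined with an elementary arc count on the standard alternating diagram of the knot. The key observation is that Proposition \ref{Pro-1_MainTheo} has already established the sharp equality $mincol_{p,m}(T_{2,b})=c(T_{2,b})$, which pins the minimum number of colors to the number of arcs of the minimal alternating diagram, leaving no room for color repetition.

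First I would fix an integer $m>1$ for which $p := \Delta_{T_{2,b}}^{0}(m)$ is an odd prime. The inequality $m<p$ is automatic because $\Delta_{T_{2,b}}^{0}(m) = 1 - m + m^2 - \cdots + m^{b-1} > m$ for every odd $b \geq 3$. Existence of at least one such $m$ can be asserted case by case, for instance $m=2,\,p=3$ when $b=3$ and $m=2,\,p=11$ when $b=5$; more generally one simply looks for prime values among the integers $(m^b+1)/(m+1)$.

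Next I would let $D$ denote the standard reduced alternating diagram of $T_{2,b}$, which has exactly $b$ crossings and therefore exactly $b$ arcs. Since $p$ divides $\Delta_{T_{2,b}}^{0}(m)=p$, the Alexander coloring system of $D$ modulo $p$ admits non-trivial solutions, so $D$ carries a non-trivial $(p,m)$-coloring.

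Finally, Proposition \ref{Pro-1_MainTheo} gives $mincol_{p,m}(T_{2,b}) = c(T_{2,b}) = b$. Hence every non-trivial $(p,m)$-coloring of $D$ must use at least $b$ distinct colors, while it cannot use more than $b$ colors because $D$ carries only $b$ arcs. The two bounds together force every such coloring to assign pairwise distinct colors to the $b$ arcs of $D$, which is precisely the KH-behavior condition. The only genuinely delicate point in the whole argument is the number-theoretic guarantee that $\Delta_{T_{2,b}}^{0}(m)$ takes a prime value for some $m>1$; once that is granted, the sharp equality $mincol_{p,m}(T_{2,b}) = c(T_{2,b})$ does all the work.
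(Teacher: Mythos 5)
Your argument is essentially the paper's own proof: both deduce KH-behavior directly from the equality $mincol_{p,m}(T_{2,b})=c(T_{2,b})$ of Proposition \ref{Pro-1_MainTheo} together with the fact that the reduced alternating diagram has exactly $c(T_{2,b})=b$ arcs, so no color can repeat (the paper merely phrases this as a contradiction with anti-KH behavior). You are in fact slightly more careful than the paper in flagging that the existence of an $m>1$ with $\Delta_{T_{2,b}}^{0}(m)$ prime is an unproved number-theoretic assumption on which the whole statement rests.
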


\begin{proof}
Suppose $T_{2,b}$ displays anti-KH behavior i.e. there is an integer $m$ such that $1 < m < \Delta_{T_{2,b}}^{0}(m)=p$ where $p$ is prime and for any reduced alternating diagram of $T_{2,b}$ equipped with non-trivial $(p,m)$-coloring there exists at least two distinct arcs with the same color. This implies that the number of colors is strictly less than the crossing number of $T_{2,b}$ (contradiction because we proved that $mincol_{p,m}(T_{2,b})=c(T_{2,b})$).
\end{proof}

\section{Generalization of Theorem 2.1 to links}

It is worth mentioning that the proof of Theorem \ref{Theo1} given in \cite{Kauff_Lopes} cannot be naturally extended to links with non-zero determinant. However, it has already been proved that there is an analogous theorem for links in the case of Fox-colorings \cite{Ichi_Matsu}. In view of this, it is natural to ask what can we say for links colored by linear Alexander quandles. This section is devoted to answer this question. We show the following theorem.

\begin{Theo}\label{Theo.4}
Let $L$ be a link whose reduced Alexander polynomial $\Delta^0_{L}(t) \neq 0$. Let $m$ be an integer and $p$ a prime factor of  $\Delta_{L}^{0}(m)$ such that $L$ admits non-trivial $(p,m)$-colorings. Suppose that $\Delta_{L}^{0}(m) \neq 0$ then
$$2 + \lfloor \log_M p \rfloor \leq mincol_{p,m}(L),$$
where $M= max \lbrace \vert m \vert, \vert m-1 \vert \rbrace$.
\end{Theo}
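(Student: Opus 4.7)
The plan is to adapt, essentially verbatim, the proof of Theorem \ref{Theo1} from \cite{Kauff_Lopes}. Take a diagram $D$ of $L$ that realizes a non-trivial $(p,m)$-coloring $\mathcal{C}$ using exactly $n = mincol_{p,m}(L)$ distinct colors. The map $x\mapsto x+a$ is a quandle automorphism of $\Lambda_{p,m}$ (since $m(x+a)+(1-m)(y+a) = (x*y)+a$), so I first normalize $\mathcal{C}$ so that one of its colors is $0$; by non-triviality, some nonzero color $c_1$ also appears, and I fix two arcs $\alpha_0,\alpha_1$ realizing $0$ and $c_1$. Lift every color of $\mathcal{C}$ to an integer representative.

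The heart of the argument is a propagation estimate. Traverse the diagram starting from $\alpha_0,\alpha_1$. At every crossing, the coloring relation $z = mx+(1-m)y$ (or its inverse $z = m^{-1}x+(1-m^{-1})y$) expresses the outgoing under-arc color, as an integer, in terms of previous colors with coefficients of absolute value at most $M=\max\{|m|,|m-1|\}$. Because only $n$ distinct colors are ever produced, the walk introduces a new integer value at most $n-2$ times after leaving the base pair. A straightforward induction then shows that every color in the image of $\mathcal{C}$ is congruent mod $p$ to an integer of absolute value at most $M^{n-2}$ times a quantity depending only on $c_1$. Since the $n$ colors are distinct mod $p$, pigeonhole gives $p \leq C\cdot M^{n-2}$ for an absolute constant $C$, which rearranges to $2+\lfloor\log_M p\rfloor\leq n$.

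The only delicate point compared to the knot case is the traversal step itself: Kauffman and Lopes follow \emph{the strand of $K$}, which no longer visits every arc when $L$ has more than one component. The fix is to replace this with a traversal of the underlying connected $4$-valent planar graph of $D$, visiting each arc in turn and spending one factor of $M$ each time a genuinely new color appears. The hypothesis $\Delta^0_L(m)\neq 0$ is used to guarantee that the space of $(p,m)$-colorings remains finite, that $p$ dividing $\Delta^0_L(m)$ gives rise to genuine non-trivial colorings, and that the asserted bound is not vacuous. I expect the main obstacle to be verifying carefully that the multi-component traversal preserves the $M^{n-2}$ bound on integer coefficients, together with handling the extended notion of non-triviality for $\mu\geq 2$ (one must also rule out colorings that are constant on each component but use different constants on different components, which is where the hypothesis $\Delta^0_L(m)\neq 0$ enters a second time).
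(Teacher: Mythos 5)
Your proposal does not follow the paper's route, and as written it has genuine gaps at the quantitative heart of the argument. First, the per-crossing estimate is not what you claim: from $z=mx+(1-m)y$ one only gets $|z|\leq |m|\,|x|+|m-1|\,|y|\leq(|m|+|m-1|)\max\{|x|,|y|\}$, and $|m|+|m-1|=2M-1$, not $M$; a factor of $M$ per new color is only available if you propagate \emph{differences} of colors (note $z-y=m(x-y)$ and $z-x=(m-1)(x-y)$), which your sketch does not do. Moreover, at a negative crossing the relation $z=m^{-1}x+(1-m^{-1})y$ involves $m^{-1}\bmod p$, which is not bounded by $M$ as an integer, and the seed color $c_1$ can be as large as $(p-1)/2$, so a bound of the form $M^{n-2}\cdot(\text{quantity depending on }c_1)$ is vacuous without a further normalization (available via the scaling automorphism $x\mapsto\lambda x$, but not invoked). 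Second, and more seriously, the concluding step does not follow: knowing that $n$ distinct residues mod $p$ all have integer representatives of absolute value at most $B$ yields $n\leq 2B+1$, not $p\leq CB$. To conclude $p\leq M^{\,n-1}$ you must exhibit a specific \emph{non-zero} integer that is divisible by $p$ and bounded by $M^{\,n-1}$; producing it, and proving it non-zero, is exactly where the hypothesis $\Delta^0_L(m)\neq 0$ must enter quantitatively (it forces the coloring matrix to have corank one), and your sketch never connects that hypothesis to the estimate --- it is invoked only for finiteness and non-vacuity. Finally, the multi-component traversal is precisely the point the paper warns about (it states that the Kauffman--Lopes proof ``cannot be naturally extended to links''), and your proposal ends by listing the verification of the $M^{n-2}$ bound under that traversal as an expected obstacle rather than carrying it out; a proof that defers its main obstacle is not complete.

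For contrast, the paper's proof avoids traversal entirely and follows Ichihara--Matsudo: starting from the $q\times q$ coloring matrix $A$ (of rank $q-1$ because a first minor equals $\Delta^0_L(m)\neq 0$ up to a unit), it identifies arcs carrying the same color to obtain a $q\times d$ matrix $A_1$ whose rows still have non-zero entries only in $\{-m,m-1,1\}$ and whose rank is $d-1$; it then extracts a non-singular $(d-1)\times(d-1)$ submatrix $B$ with $p\mid\det B$ and, by the determinant lemma of Kauffman--Lopes, $|\det B|\leq M^{\,d-1}$, whence $p\leq|\det B|\leq M^{\,d-1}$ and $d\geq 2+\lfloor\log_M p\rfloor$. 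If you wish to salvage a propagation-style argument, you would need at minimum the difference identities above together with a mechanism that actually produces a non-trivial $p$-divisible integer of controlled size; in effect this reconstructs the determinant $\det B$ by hand.
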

The  proof we give is inspired by that given in \cite{Ichi_Matsu} for effective Fox $n$-colorings.\\

Let $D$ be a diagram of $L$ equipped with a non-trivial $(p,m)$-coloring. Let $q$ be the number of arcs of $D$, $\{a_1,\ldots, a_q\}$ be the set of arcs of $D$ and $\{c_1,\ldots, c_q\}$ be the set of crossings of $D$. Let $A$ be the corresponding coloring matrix. For each $i$, $1 \leq i \leq q$, let $x_i$  be the color assigned to the arc $a_i$. Note that
\begin{itemize}
\item Since the considered $(p,m)$-coloring is non-trivial then the vector $X_0={}^t(x_1,\ldots,x_q)\in \Z^q$ is a non-trivial solution of the congruence equation $AX \equiv 0 \pmod p$.
\item Each row of $A$  contains only three non-zero entries which are $-m$, $m-1$ and $1$.
\item Since any first minor of $A$ is the reduced Alexander polynomial of $L$ (up to multiplication by $\pm t^{n}$) valued at $m$ is assumed not equal to $0$, then $\rank A= q-1$.
\end{itemize}
Let $d$ be the number of pairwise distinct colors $x_i$, $1 \leq i \leq d$. We agree to associate the same label to the arcs having the same color. So, we get the label set $\{a_1,...,a_d\}$. By writing the coloring condition using the new labelings at each crossing of $D$, we get a new system of homogeneous linear equations $\pmod p $ whose matrix is denoted by $A_1$. This system has $q$ equations in $d$ unknowns.\\

Note that the matrix $A_1$ is exactly the matrix obtained in the procedure described in the proof of Theorem 1.1 in \cite{Ichi_Matsu} which consists of adding the \(j\)\textsuperscript{th} column to the \(i\)\textsuperscript{th} column and then deleting the \(j\)\textsuperscript{th} one, for all $i$ and $j$, $1\leq i<j \leq q$, such that $x_i=x_j$.\\

The matrix $A_1$ satisfies the following properties.
\begin{itemize}
\item The vector $Y_0={}^t(y_1,...,y_d)$ obtained from $X_0$ by keeping only one representative for each color occurring in $X$ provides a non-trivial solution to $A_1Y \equiv 0 \pmod p $.
\item The only non-zero entries in any row of $A_1$ are still the same as those of any row of $A$, namely $-m$, $m-1$ and $1$. This is true because the sum of two of these entries cannot be associated to a same color occurring at a same crossing, otherwise the three colors associated to arcs of that crossing will be the same. 
\item $\rank A_1 =d-1$: We note that $\rank A_1 \le d-1$. Suppose that $\rank A_1 <d-1$, this implies that $\rank A <q-1$. This contradicts the fact that $\rank A=q-1$.
\end{itemize}
Now, we consider $d-1$ linearly independent row vectors of $A_1$. Then we get a $(d-1)\times d$ matrix $A_2$. Note that $Y_0={}^t(y_1,...,y_d)$ gives a solution of $A_2Y \equiv 0 \pmod p$ because the rows of $A_2$ form a subset of those of $A_1$. 
Since the non-zero entries on each row of $A_2$ are $-m$, $m-1$ and $1$ then by adding all the columns to the last column we obtain a $(d-1)\times d$ matrix $A_3$ whose last column contains only zeros. By applying Lemma 2.1 in \cite{Naka_Nakani_Satoh}, we see that $Z_0={}^t(y_1-y_d,...,y_{d-1}-y_d,0)$ is a non-trivial solution of $A_3Z \equiv 0 \pmod p$.
\begin{Lem}\label{Lem_Theo}
Let $p$ be an odd prime and $m$ be an integer. Let $L$ be a link
admitting non-trivial $(p, m)$-colorings. Let $B$ be the square matrix $(d-1)\times(d-1)$ obtained from $A_3$ by deleting the last column, then we have the following:
\begin{enumerate}[label=(\roman*)]
\item $\vert \det(B)\vert \leq  M^{d-1}$, where $M= max\lbrace\vert m \vert, \vert m-1 \vert\rbrace$.
\item $det(B)$ is divisible by $p$.
\end{enumerate} 
\end{Lem}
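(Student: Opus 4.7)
The plan is to tackle the two assertions in turn, starting with (ii), which follows almost immediately from the construction preceding the lemma.

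For (ii), I would observe that the discussion preceding the lemma has already produced a non-trivial solution $Z_0 = {}^t(y_1 - y_d, \ldots, y_{d-1} - y_d, 0)$ of $A_3 Z \equiv 0 \pmod{p}$. Because the last column of $A_3$ is identically zero, the truncated vector $Z_0' = {}^t(y_1 - y_d, \ldots, y_{d-1} - y_d)\in (\Z/p\Z)^{d-1}$ obtained by deleting the last (zero) coordinate satisfies $B\,Z_0' \equiv 0 \pmod{p}$. Since the $d$ colors $y_1, \ldots, y_d$ are pairwise distinct modulo $p$ by the very definition of $d$, the vector $Z_0'$ is non-zero in $(\Z/p\Z)^{d-1}$. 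Hence $B$ is singular modulo $p$, and therefore $p \mid \det(B)$.

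For (i), I would exploit the fact that each row of $B$ comes from a crossing of the chosen diagram of $L$, so it carries at most three non-zero entries drawn from $\{-m,\,m-1,\,1\}$, all bounded in absolute value by $M$. Two cases arise: either all three entries survive in $B$ (in which case they sum to zero since $-m+(m-1)+1=0$), or exactly one of them has been deleted because the corresponding arc carries the reference color $y_d$, leaving two entries from $\{-m,\,m-1,\,1\}$. I would then argue $|\det(B)| \leq M^{d-1}$ by induction on the size $d-1$ of $B$. In the inductive step I would expand along a row whose two surviving entries are exactly $m-1$ and $1$; such a row corresponds to a crossing at which $y_d$ labels the incoming under-arc. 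Since every arc of the diagram labels the incoming under-arc of exactly one crossing, such an ``ideal'' row always exists in the coloring matrix and can be arranged to lie in the selection $A_2$. Cofactor expansion along this row, combined with the induction hypothesis applied to the two resulting $(d-2)\times(d-2)$ minors, gives $|\det(B)| \leq (m-1)M^{d-2} + 1\cdot M^{d-2} = M^{d-1}$.

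The delicate point, and what I expect to be the main obstacle, is to justify the inductive step in its full generality: after striking the chosen row and a column containing one of its non-zero entries, the residual matrix need not inherit the existence of a fresh ideal row. To handle this I would formulate the inductive hypothesis purely as the bound $|\det(B')|\leq M^{k}$ for every $k\times k$ integer matrix with entries in $\{-m,\,m-1,\,1,\,0\}$ and at most three non-zero entries per row, and, in the generic inductive step when no ideal row is immediately at hand, use integer column operations based on the identity $-m+(m-1)+1=0$ to manufacture one without altering the determinant. Should this route prove too intricate, a fallback would be to estimate $|\det(B)|$ directly via a combinatorial bound on the Leibniz expansion that simultaneously exploits the entry-size bound $M$ and the at-most-three sparsity of each row.
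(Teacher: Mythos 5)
Your argument for part (ii) is correct and coincides with the paper's: since the last column of $A_3$ is identically zero, the truncated vector $V_0={}^t(y_1-y_d,\ldots,y_{d-1}-y_d)$ satisfies $BV_0\equiv 0\pmod p$, and it is non-zero in $(\Z/p\Z)^{d-1}$ because the colors $y_i$ are pairwise distinct mod $p$; hence $p\mid\det B$.

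For part (i) there is a genuine gap. The paper does not prove the determinant bound itself: it observes that every row of $B$ still has its non-zero entries among $-m$, $m-1$, $1$ (each occurring at most once, because two equal colors at a crossing force all three to be equal) and then invokes Lemma 3.1 of Kauffman--Lopes, which is engineered precisely for matrices of this shape. Your attempted re-derivation by cofactor expansion does not close, as you yourself suspect, and neither proposed repair works. The strengthened induction hypothesis --- $|\det B'|\le M^k$ for every $k\times k$ matrix with entries in $\{-m,m-1,1,0\}$ and at most three non-zero entries per row --- is false: for $m=2$ (so $M=2$) the matrix $\left(\begin{smallmatrix}-2&-2\\ 1&-2\end{smallmatrix}\right)$ lies in that class and has determinant $6>M^2=4$; once an element of the triple may repeat within a row the bound fails, so any workable induction must at least forbid repetitions, and even then the inductive step stalls on rows whose two surviving entries are a ``bad'' pair such as $\{-m,m-1\}$ (or $\{-m,1\}$ when $m\ge 1$), for which $|a|+|b|=2M-1>M$ and expansion only yields $(2M-1)M^{k-2}$ per step. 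The Leibniz fallback is weaker still: it bounds $|\det B|$ by the permanent of the matrix of absolute values, which for these sparse matrices is generically exponentially larger than $M^{d-1}$. (There is also a smaller issue at the top level: the crossing at which the $y_d$-colored arc passes under may be monochromatic, in which case the corresponding row of $A_1$ is zero rather than ``ideal''; one must choose $y_d$ to be the incoming under-arc color of some non-monochromatic crossing.) The efficient route is the paper's: verify the row structure of $B$ and cite Lemma 3.1 of Kauffman--Lopes, whose proof is exactly where these difficulties are resolved.
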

\begin{proof}
\begin{enumerate}[label=(\roman*)]
\item Since each row of $B$ contains at least two non-zero entries belonging to the set $\{-m,m-1,1\}$, by applying Lemma 3.1 in \cite{Kauff_Lopes} we have $\vert \det(B)\vert \leq M^{d-1}$, where $M= max\lbrace\vert m \vert, \vert m-1 \vert\rbrace$.
\item We denote $V_0={}^t(y_1-y_d,...,y_{d-1}-y_d)$ which is a non-trivial solution of $BV \equiv 0 \pmod p$ then $\det B=0 \pmod p$.
\end{enumerate}
\end{proof}
\begin{proof}[Proof of Theorem \ref{Theo.4}]
By lemma \ref{Lem_Theo} we have,
$$ p \leq \vert det B \vert \leq M^{d-1},$$
where $M= max\lbrace\vert m \vert, \vert m-1 \vert\rbrace$. We remove $\vert det(B)\vert$ from the inequalities and then we get $p<M^{d-1}$. By applying logarithm base $M$ we obtain
$$ d > 1+\log_Mp.$$
\end{proof}

\begin{Exp}
We consider the link $L4a1\lbrace1\rbrace$ depicted in the following figure \ref{Fig.8}.
\begin{figure}[H]
\centering
	\includegraphics[scale=0.13]{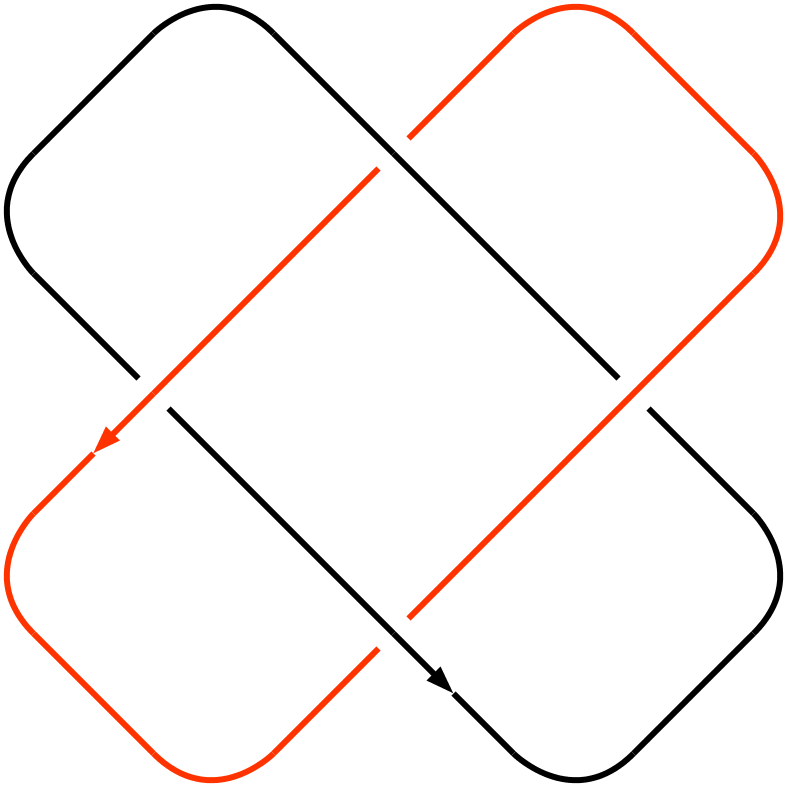} 
	\caption{The link $L4a1\lbrace1\rbrace$. \label{Fig.8}}
\end{figure}
The reduced Alexander polynomial of the link $L4a1\lbrace1\rbrace$ is $\Delta_{L4a1\lbrace1\rbrace}^0(t)=1+t^2$. For any integer $m$, if $p=\Delta_{L4a1\lbrace1\rbrace}^0(m)$ is a prime integer, then the link $L4a1\lbrace1\rbrace$ admits non-trivial $(p,m)$-colorings. For any $m>1$ we have $\Delta_{L4a1\lbrace1\rbrace}^0(m)=1+m^2$. This is a positive integer which can be written as follows
$$\Delta_{L4a1\lbrace1\rbrace}^0(m) = \sum \limits_{\underset{}{i=0}}^{2} d_i m^i,$$
where $d_0=1$, $d_1=0$ and $d_2=1$. Since $d_2\geq 1$ and for any $0\leq i \leq 2$, $0\leq d_i <m$, then by Theorem 8.10.1 in \cite{adhikari} we have $\lfloor \log_m{\Delta_{L4a1\lbrace1\rbrace}^{0}(m)}\rfloor =2$. Hence, for all integers $m>1$,
$$mincol_{p,m}(L4a1\lbrace1\rbrace)\geq 4.$$
Since the crossing number of the link $L4a1\lbrace1\rbrace$ is $4$, then for any integer $m>1$,
$$mincol_{p,m}(L4a1\lbrace1\rbrace)= 4.$$
The following table displays some prime values for the reduced Alexander polynomial $\Delta_{L4a1\lbrace1\rbrace}^0(m)$.
\begin{table}[htbp]
  \centering
  \begin{tabular}[c]{|p{2cm}|p{1cm}|p{1cm}|p{1cm}|p{1cm}|p{1cm}|p{1cm}|p{1cm}|p{1cm}|}
  \hline
 $m$ & $2$ & $4$ & $6$ & $10$ & $14$ & $16$ & $20$ & $24$\\
 \hline
 $\Delta_{L4a1\lbrace1\rbrace}^0(m)$ & $5$ & $17$ & $37$& $101$ & $197$  & $257$ & $401$ & $577$ \\
\hline
  \end{tabular}
  \caption{Prime values of $\Delta_{L4a1\lbrace1\rbrace}^0(m)$.}
\end{table}
\end{Exp}

\end{document}